\def\dar[#1]{\ar@<2pt>[#1]\ar@<-2pt>[#1]}
\newtheorem{Thm}{Theorem}[section]
\newtheorem{Prop}[Thm]{Proposition}
\newtheorem{Lem}[Thm]{Lemma}
\newtheorem{Cor}[Thm]{Corollary}
\theoremstyle{remark}
\newtheorem{Rem}[Thm]{Remark}
\theoremstyle{definition}
\newtheorem{Def}[Thm]{Definition}
\newtheorem{Exa}[Thm]{Example}
\newcommand{\C}{\mathbb{C}}
\newcommand{\R}{\mathbb{R}}
\newcommand{\Z}{\mathbb{Z}}
\DeclareMathOperator{\ad}{ad}
\DeclareMathOperator{\Diff}{Diff}
\DeclareMathOperator{\Aut}{Aut}
\DeclareMathOperator{\Fix}{Fix}
\DeclareMathOperator{\Hom}{Hom}
\newcommand{\llb}{[\![}
\newcommand{\rrb}{]\!]}
\newcommand{\g}{\mathfrak{g}}
\newcommand{\h}{\mathfrak{h}}
\newcommand{\cO}{\mathcal{O}_\lambda}
\author{Damien Calaque and Florian N\"af}
\title{A trace formula for the quantization of coadjoint orbits}
\date{}
\begin{document}

\maketitle

\begin{abstract}
\noindent{\bf Abstract.} The main goal of this paper is to compute the characteristic class of the Alekseev-Lachowska $\star$-product on coadjoint orbits. 
We deduce an analogue of the Weyl dimension formula in the context of deformation quantization. 
\end{abstract}

\setcounter{tocdepth}{2}
\tableofcontents

\section*{Introduction}
\addcontentsline{toc}{section}{Introduction}

The Algebraic Index Theorem of Fedosov \cite{FeBook} and Nest-Tsygan \cite{NT} provides in principle a powerful tool to get various kinds of trace formul\ae~for non-commutative 
algebras arising as deformation quantization of symplectic manifolds. However, it involves a characteristic class associated with any such deformation quantization, which might be very 
hard to compute explicitly. 

\medskip

The purpose of this note is to provide a computation of the characteristic class associated with a deformation quantization of a semi-simple coadjoint orbit $\mathcal O_\lambda\subset\mathfrak g^*$, 
that has been constructed by Alekseev and Lachowska \cite{AL} by means of representation theory. More precisely, we show that the characteristic class of the Alekseev-Lachowska quantization is 
$$
-\lambda+ t \, i \, \rho \in \Big(\frac{\mathfrak{h}}{[\h,\h]}\Big)^*\llb t \rrb \rightarrow H^2\Big(\mathcal O_\lambda,\R\llb t \rrb\Big)\,,
$$
where $\rho$ is the half-sum of positive roots. 
 
\medskip

We hope that the methods presented in this short note can be used and applied to other more appealing open problems, such as 
computing the characteristic class of some combinatorially defined quantization of the moduli space of flat connections 
on a surface (see e.g.~\cite{AGS}) and thus leading to a proof of Conjecture 7.2 in \cite{RS}. 

\subsubsection*{Organization of the paper}

The paper begins with a short recollection on deformation quantization: we recall the standard existence and classification results for $\star$-products, 
as well as the trace formula of Fedosov and Nest-Tsygan. We continue the recollection in Section 2 about deformation quantization in the presence 
of a polarization or a separation of variables, after Bressler-Donin and Karabegov. Section 3, which is the core of this note, is devoted to the definition and properties 
of the Alekseev-Lachowsla $\star$-product: we prove that it has a separation of variables and a strong quantum momentum map, and we use these properties to compute 
its characteristic class. In the last Section we provide a purely algebraic expression for $Tr(1)$, which can be understood as an analog of the Weyl dimension formula 
in the context of deformation quantization. 

\subsubsection*{Notation}

If $E$ is a vector bundle over a manifold $M$ then $\mathcal E$ denotes its sheaf of sections. 

\subsubsection*{Acknowledgements} 

We thank Anton Alekseev for suggesting the problem to us. 
This work was done when the second author was an undergraduate student at ETH Z\"urich, as part of his Master Thesis.  
The work of D.C.~has been partially supported by a grant from the Swiss National Science Foundation 
(project number $200021\underline{~}137778$). 


\section{Short recollection on deformation quantization}

Recall that a {\it deformation quantization} of a symplectic manifold $(M,\omega)$ is an associative $\mathbb{C} \llb t \rrb$-linear product $\star$ 
on $C^\infty(M,\C) \llb t \rrb$ of the form
$$
f\star g =fg+tB_1(f,g)+t^2B_2(f,g)+\cdots\qquad\big(f,g\in C^\infty(M,\C)\big)\,,
$$
where $B_i$'s are bidifferential operators on $M$ vanishing on constants such that
$$
\{f,g\}=\frac{1}{t}(f\star g-g\star f)_{|t=0}\qquad\big(f,g\in C^\infty(M,\C)\big)\,,
$$
where the Poisson bracket $\{\cdot,\cdot\}$ is the one induced by the symplectic structure $\omega$. 
We call $\star$ a {\it $\star$-product} and it turns the sheaf $C^\infty_M\llb t \rrb$ of 
series of $\C$-valued smooth functions into a sheaf of associative algebras, denoted $\mathcal A_t=\big(C^\infty_M\llb t \rrb,\star\big)$. 

\begin{Exa}[Moyal-Weyl quantization]
Let $M$ be an open subset of a symplectic vector space. The series $\exp(\frac{t}{2}\pi)$, where $\pi=\omega^{-1}$ is the Poisson bivector associated with $\omega$, 
defines a $\star$-product. 
\end{Exa}
Two deformation quantizations $\star_1$ and $\star_2$ are said to be equivalent (or isomorphic) if there is a series 
$D=id+tD_1+t^2D_2+\cdots$ of differential operators such that $D(f\star_1g)=D(f)\star_2D(g)$. 
We write $\underline{\Aut}(\mathcal A_t)$ for the sheaf of self-equivalences of $\star$. 

\begin{Thm}[\cite{Fe,DWL}]
Any symplectic manifold admits a deformation quantization. 
\end{Thm}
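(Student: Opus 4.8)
The plan is to follow Fedosov's geometric construction, which not only establishes existence but produces a canonical $\star$-product attached to any choice of symplectic connection — and which underlies the characteristic-class computations carried out later in this note.

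First I would fix a torsion-free symplectic connection $\nabla$ on $(M,\omega)$; such connections always exist (take any torsion-free connection and average it against $\omega$), though they are far from unique. Next I would introduce the formal Weyl bundle $\mathcal W\to M$, whose fibre over $x$ is the formal Weyl algebra of $(T_xM,\omega_x)$: its sections are formal series $\sum a_{\alpha,k}(x)\,y^\alpha t^k$ in fibre coordinates $y^i$ and in the deformation parameter $t$, equipped fibrewise with the Moyal--Weyl product of Example 1.2. Assigning the weights $\deg y^i=1$ and $\deg t=2$ turns $\mathcal W$ into a bundle of filtered algebras, and the filtration is what will drive all the convergence arguments.

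The heart of the argument is the construction of a flat \emph{Fedosov connection}
$$
D=\nabla-\delta+\tfrac1t[\gamma,\,\cdot\,]
$$
on $\mathcal W$-valued forms, where $\delta=dx^i\wedge\partial_{y^i}$ is the Koszul differential and $\gamma$ is a $\mathcal W$-valued $1$-form to be determined. Flatness $D^2=0$ amounts to demanding that the Weyl curvature $R+\nabla\gamma-\delta\gamma+\tfrac1t\gamma\star\gamma$ be a central (scalar-valued) $2$-form $\Omega$, which one prescribes to have leading term $\omega$; this is equivalent to the fixed-point equation $\delta\gamma=R+\nabla\gamma+\tfrac1t\gamma\star\gamma-\Omega$. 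I would solve it recursively along the weight filtration by inverting $\delta$ through the standard contracting homotopy $\delta^{-1}$, which satisfies $\delta\delta^{-1}+\delta^{-1}\delta=\mathrm{id}-\sigma$, with $\sigma$ the projection onto fibrewise constants. Each iteration raises the weight by one, so the procedure converges $t$-adically and yields a unique normalized $\gamma$.

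Finally I would show that the symbol map $\sigma\colon\ker D\to C^\infty(M,\C)\llb t\rrb$, $a\mapsto a|_{y=0}$, is a $\C\llb t\rrb$-linear bijection: given $f$, flatness lets one reconstruct the unique flat section with symbol $f$ by a convergent iteration using $\delta^{-1}$. Transporting the fibrewise Moyal--Weyl product through this bijection defines $f\star g:=\sigma\big(\sigma^{-1}(f)\star\sigma^{-1}(g)\big)$ on $C^\infty(M,\C)\llb t\rrb$, and a degree count identifies it as $fg+tB_1(f,g)+\cdots$ with bidifferential $B_i$ vanishing on constants, the antisymmetric part of $B_1$ recovering the Poisson bracket of $\omega$. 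The main obstacle is the flatness step: proving that the recursion for $\gamma$ both converges and genuinely yields $D^2=0$, which rests on the algebraic identities relating $\delta$, $\delta^{-1}$ and $\nabla$ to the Moyal--Weyl product. (The alternative De Wilde--Lecomte route builds $\star$ order by order in $t$, the extension obstructions living in a Hochschild-type $H^3$ that one shows can always be killed; it proves existence but is far less explicit.)
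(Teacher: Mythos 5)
Your proposal is a correct sketch of Fedosov's construction (symplectic connection, Weyl bundle with its filtration, recursive flattening via the contracting homotopy $\delta^{-1}$, and the symbol-map bijection transporting the fibrewise Moyal--Weyl product), which is precisely the content of the reference \cite{Fe} that the paper cites; the paper itself offers no proof beyond this citation. So you have taken essentially the same route as the paper's source, and the argument is sound.
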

Note that the above references \cite{Fe,DWL} exhibit a distinguished equivalence class of $\star$-products (the same in both constructions, see \cite{Deligne}). 
We call it the {\it canonical class}. 

\subsection{Classification}\label{ssec-classif}

Any two $\star$-products are locally equivalent. Hence any $\star$-product can be described by means of a fixed one 
$\star$ and local equivalences. Its equivalence class is then uniquely defined by the corresponding 
first cohomology class with values in the sheaf $\underline{\Aut}(\mathcal{A}_t)$. 

In more abstract terms, local properties of $\star$-products show that we have a gerbe of $\star$-products over our manifold. 
By the above existence Theorem we get a global section $\star$ of this gerbe. In this case, equivalence classes of global sections 
are in one-to-one correspondence with
$$
H^1(M,\underline{\Aut}(\mathcal{A}_t))\,.
$$

Note that the automorphism group is pro-unipotent. In particular any automorphism can be written as $\exp(\phi)$ for a nilpotent derivation of $\mathcal{A}_t$. 
Moreover, it can easily be seen that locally, any such derivation is inner. Therefore, we get the following short exact sequence of sheaves of groups 
(and central extension)
$$
1 \longrightarrow 1+t\underline{\C}\llb t \rrb \longrightarrow 1+t\mathcal{A}_t \longrightarrow  \underline{\Aut}(\mathcal{A}_t) \longrightarrow 1
$$
This induces a boundary morphism in non-abelian cohomology
$$
H^1(M, \underline{\Aut}(\mathcal{A}_t)) \rightarrow H^2(M,1+t\underline{\C}\llb t \rrb)\cong tH^2(M,\C)\llb t \rrb\,,
$$
which happens to be an isomorphism because $\mathcal{A}_t$ is soft (as sheaf of $\C\llb t \rrb$-modules) and therefore is acyclic. 
Hence the set of equivalence classes of $\star$-products is an affine space modeled on $tH^2(M,\C)\llb t \rrb$. 

We declare by convention that the characteristic class of the canonical class is $[\omega]\in H^2(M,\mathbb{C})$. Once this convention set up, 
the set of equivalence classes is canonically identified with $[\omega]+tH^2(M,\C)\llb t \rrb$. 

The {\it characteristic class} of a $\star$-product is the image in $[\omega]+tH^2(M,\C)\llb t \rrb$ of its equivalence class. 

\subsection{Canonical trace}

On the Moyal-Weyl quantization of an open subset of a symplectic vector space, one can define the following linear functional on compactly supported functions: 
\begin{equation}\label{localtrace}
\centering
Tr(f) = \frac{1}{(2 \pi t)^n} \int_{\mathbb{R}^{2n}} f \frac{\omega^n}{n!}, \quad \text{for } f \in C^\infty_c(\R^{2n}) \llb t \rrb.
\end{equation}
It vanishes on $\star$-commutators; it is actually, up to normalization, the unique linear functional vanishing on $\star$-commutators. 
We hence call $Tr$ the {\it canonical trace}. 
Since all $\star$-products are locally equivalent to a Moyal-Weyl product, one can extend this definition to global functions with compact support 
as long as it is invariant under auto-equivalences of the Moyal-Weyl product. This is indeed the case, as any derivation (and thus any auto-equivalence) 
of the Moyal-Weyl $\star$-product is inner. 
\begin{Def}
The unique $\C(\!(t)\!)$-linear functional $Tr: C^\infty_c\big(M,\C(\!(t)\!)\big) \rightarrow \C (\!(t)\!)$ that vanishes on $\star$-commutators and is locally given 
by \eqref{localtrace} is called the {\it canonical trace}.
\end{Def}
With this definition we can now cite the Fedosov-Nest-Tsygan theorem (see \cite{FeBook,NT}). 
\begin{Thm}
Let $\mathcal{A}_t=\big(C^\infty(M)\llb t \rrb,\star)$ be a deformation quantization of a compact symplectic manifold $(M,\omega)$ with characteristic class $\theta$, then
\begin{equation}\label{FNT}
Tr(1) = \frac{1}{(2\pi)^n} \int_M e^{\frac{\theta}{t}} \ \hat{A}(TM,\omega),
\end{equation}
where $\hat{A}$ is the multiplicative genus induced by the power series $\left(\frac{z/2}{\sinh(z/2)}\right)^{\frac{1}{2}}$, that is
$$
\hat{A}(TM,\omega) = \text{$\det$}^\frac{1}{2}\frac{R/2}{\sinh(R/2)}
$$
for the curvature $R$ of a symplectic connection on $TM$.
\end{Thm}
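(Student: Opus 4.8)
The statement is a verbatim citation of the Fedosov--Nest--Tsygan algebraic index theorem, so the plan is to recall the architecture of its proof rather than to devise a new argument. The approach I would take is Fedosov's, in which the $\star$-product is realized geometrically. First I would present the quantization through the formal Weyl bundle $\mathcal W \to M$, whose fiber is $\C\llb y_1,\dots,y_{2n},t\rrb$ equipped with the fiberwise Moyal product, together with Fedosov's flat connection $D=-\delta+\nabla+\tfrac1t[r,\cdot]$, where $\nabla$ is a symplectic connection lifted to $\mathcal W$, $\delta$ is the standard Koszul-type operator, and $r$ is the $\mathcal W$-valued one-form determined by the flatness equation $D^2=0$. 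The crucial input is that the algebra of $D$-flat sections is isomorphic, as a sheaf of algebras, to $\mathcal A_t=(C^\infty_M\llb t\rrb,\star)$, and that the Weyl-curvature of $D$ recovers the characteristic class $\theta$ up to the normalization $[\omega]$ of the canonical class.

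Next I would construct the trace as a fiberwise operation. On $\mathcal W$ there is a natural fiberwise trace, given by integrating the symbol against the Moyal volume on the fiber, and one checks that it is annihilated by $D$. Consequently it descends to a $\C(\!(t)\!)$-linear functional on flat sections which vanishes on $\star$-commutators; by the uniqueness of traces recalled above it agrees, up to normalization, with the canonical trace $Tr$. In particular $Tr(1)$ is obtained by integrating over $M$ a \emph{trace density}, a top-degree form assembled from the Fedosov data applied to the flat section representing the constant $1$. Reducing $Tr(1)$ to the integral of a single closed characteristic form is the conceptual heart of the matter; it rests on the locality of $\star$ together with the $\Sp(2n)$-equivariance of the local model, which force the density to be a universal characteristic expression in the curvature.

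The main computation, and the step I expect to be hardest, is to identify this trace density explicitly as $e^{\theta/t}\,\hat A(TM,\omega)$. This is a purely local statement about the formal Weyl algebra: one expands the flat section lifting $1$, tracks how the curvature of $D$ enters the fiberwise trace, and extracts the resulting characteristic form. The exponential $e^{\theta/t}$ comes from the $\tfrac1t[r,\cdot]$ part of the connection, whose Weyl-curvature is $\theta$, while the $\hat A$-genus arises from the discrepancy between the naive fiberwise trace and the geometrically normalized one — precisely the symplectic (metaplectic) analogue of the appearance of $\hat A$ in the classical index theorem, which in the Nest--Tsygan formulation is encoded in the comparison of the cyclic fundamental class with the de Rham complex of $M$. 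Assembling these local contributions and integrating over $M$ yields \eqref{FNT}, with the prefactor $(2\pi)^{-n}$ recording the normalization \eqref{localtrace} of the Moyal--Weyl trace.
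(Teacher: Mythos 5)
The paper does not prove this theorem at all: it is quoted verbatim as the Fedosov--Nest--Tsygan algebraic index theorem, cited from \cite{FeBook,NT}, and then used as a black box in the final section. So there is no internal proof to compare yours against; your decision to ``recall the architecture'' of the known proof is exactly parallel to what the paper does, only more detailed. Your outline is faithful to Fedosov's approach: the Weyl bundle with fiberwise Moyal product, the flat connection $D=-\delta+\nabla+\tfrac1t[r,\cdot]$, the identification of flat sections with $(C^\infty_M\llb t\rrb,\star)$, the Weyl curvature recovering $\theta$, and the reduction of $Tr(1)$ to the integral of a universal characteristic form.

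Two caveats, though, if this were to stand as an actual proof. First, the ``natural fiberwise trace, given by integrating the symbol against the Moyal volume on the fiber'' is not defined on the formal Weyl bundle: sections are formal power series in the fiber variables $y$, and such integrals do not converge or even make formal sense. What Fedosov actually does --- and what matches the paper's own definition of the canonical trace --- is to define $Tr$ via local equivalences with the Moyal--Weyl product on Darboux charts, using \eqref{localtrace} and a partition of unity, with well-definedness resting on all derivations of the local model being inner; the trace density is then extracted from this patched construction (in Nest--Tsygan's formulation the corresponding object is a cyclic cocycle). Second, the identification of that density with $e^{\theta/t}\,\hat A(TM,\omega)$ is the entire content of the theorem, and naming the mechanism ($\Sp(2n)$-equivariance of the local model, the curvature contribution of $\tfrac1t[r,\cdot]$, the metaplectic origin of $\hat A$) is not the same as carrying out the computation; as written, the hardest step is described rather than performed. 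Since the paper itself treats the result as a citation, this is acceptable as a gloss on \cite{FeBook,NT}, but it should not be presented as a self-contained proof of \eqref{FNT}.
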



\section{Polarized deformation quantization after Bressler-Donin}

Recall that a {\it polarization} on a symplectic manifold $(M,\omega_0)$ is an integrable Lagrangian subbundle $P_0$ of the complexified tangent bundle $T^\C M$. 
Let $\mathcal{O}_0$ denote the sheaf of functions that are locally constant along $P_0$:
$$
\mathcal O_0=\left\{f\in C^\infty_M\,\big|\,df_{|P_0}=0\right\}\,.
$$
It is a subsheaf of $C^\infty_M$ satisfying $\{\mathcal{O}_0,\mathcal{O}_0\}=0$. 
\begin{Exa}[Cotangent bundle]
Consider $M=T^*X$ together with its canonical symplectic structure $\omega_0=dp_i\wedge dq_i$. 
Observe that the bundle $L=T^\pi M$ of vertical vectors, where $\pi:T^*X\to X$ is the obvious projection, defines a polarization. 
Then $\mathcal O_0=\pi^{-1}C^\infty_X$ is the sheaf of functions that are constant in the fibers (i.e.~functions depending on local variables $q_i$'s only). 
\end{Exa}

The above is actually an example of a {\it real} polarization (we didn't need to complexify the tangent bundle). 
Below we give an example of a polarization which is not a real one. 

\begin{Exa}[Coadjoint orbit]\label{exa-ca}
Let $\mathfrak g$ be a Lie algebra and $\mathcal{O}_\lambda$ be the coadjoint orbit of $\lambda\in\mathfrak g^*$.
We have that $\mathcal{O}_\lambda = G/H$, $H = \{ g \in G \big| g.\lambda = \lambda \}$, is endowed with a $G$-invariant symplectic structure given by
$\omega:  \g / \mathfrak{h} \times \g / \mathfrak{h} \rightarrow \R \,;\,(x,y) \mapsto \lambda([x,y])$. 
Here we have identified the tangent bundle of $\mathcal{O}_\lambda$, which is $G$-equivariant with fiber $\g/\mathfrak{h}$, with $G \times_H \g/\mathfrak{h}$. 
It follows from \cite[(7.3)]{Nomizu} that for any splitting $\iota : \g/\mathfrak{h} \hookrightarrow \g$ there exist a basis of local vector fields 
$(x_i)_i$ whose Lie bracket is given by
$$
[x_i,x_j](\lambda) = \left[\iota\big(x_i(\lambda)\big),\iota\big(x_j(\lambda)\big)\right] + \mathfrak{h} \in \g/\h.
$$
In particular, any $\mathfrak{n} \subset \g^\C$ such that $\mathfrak{n}+\h^\C$ is a Lie subalgebra of $\g^\C$ induces an involutive subbundle. 
If $\mathfrak{n}+\h^\C$ furthermore is a Lagrangian subspace of $\big((\g/\h)^\C, \lambda([\cdot,\cdot])\big)$, then the resulting involutive subbundle defines a polarization. 
\end{Exa}
An $\mathfrak{n}$ as in the above example can be shown to exist for semi-simple coadjoint orbits. 
\begin{Exa}[Semi-simple coadjoint orbit]\label{exa-ssca}
Let us assume that $\g$ is semi-simple and that $\lambda$ corresponds to a semi-simple element $x_\lambda \in \g$ via the Killing form. 
Thus, we can identify $\h$ with the centralizer of $x_\lambda$, that is the kernel of $\ad(x_\lambda)$. 
Since $\ad(x_\lambda)$ is $H$-invariant and semi-simple, its image constitutes an $H$-invariant complement to $\h$, showing that $\cO$ is reductive. 
Since $x_\lambda$ is semi-simple it is contained in a Cartan subalgebra. We then get the root space decomposition of $\g^\C$ inducing one on $(\g/\h)^\C$. 
Let $\Delta$ denote the set of roots $\alpha$ such that $\alpha(x_\lambda) \neq 0$:
$$
(\g/\h)^\C = \bigoplus_{\alpha \in \Delta} \C e_{\alpha}\,.
$$
To construct a polarization, we notice that the eigenvalues of $\ad(x_\lambda)$ occur in pairs of opposite signs. 
We hence have a decomposition $\Delta = \Delta_- \cup \Delta_+$ such that 
$$
\{\alpha(x_\lambda) |\, \alpha \in \Delta_-\} \cap \{\alpha(x_\lambda) |\, \alpha \in \Delta_+\} = \emptyset\,.
$$
In particular, $\mathfrak{n}_- := \bigoplus_{\alpha \in \Delta_-} \C e_{\alpha}$ is $H$-invariant and induces a polarization.
\end{Exa}

\begin{Def}
A deformation quantization of a polarized symplectic manifold $(M,\omega_0,P_0)$ is a pair $(\mathcal{A}_t,\mathcal{O}_t)$ such that: \\[-0.5cm]
\begin{itemize}
	\item $\mathcal{A}_t$ is a deformation quantization of $(M,\omega_0)$: $\mathcal{A}_t=\big(C^\infty_M \llb t \rrb,\star\big)$. \\[-0.6cm]
	\item $\mathcal{O}_t$ is a commutative subsheaf of $\mathcal{A}_t$ that consists of functions vanishing along a deformation $P_t$ of $P_0$.	
	More precisely, $\mathcal{P}_t$ is a locally topologically free $C^\infty_M \llb t \rrb$-module, such that $\mathcal P_t/(t)\cong \mathcal P_0$ and 
	$\mathcal{O}_t=\left\{f \in C^\infty_M \llb t \rrb \,\big|\, df_{|P_t} = 0\right\}$. \\[-0.5cm]
\end{itemize}
\end{Def}

\subsection{Characteristic class of a polarized $\star$-product}

In this Subsection we quote a result from \cite{BD} which helps to compute the characteristic class of a polarized $\star$-product. 
Let $(M,\omega_0,P_0)$ be a polarized symplectic manifold, where we further assume one of the following conditions: \\[-0.5cm]
\begin{itemize}
\item $M$, $\omega_0$ and $P$ are analytic. \\[-0.6cm]
\item $P\cap\bar{P}$ has constant rank and $P+\bar{P}$ is involutive.
\end{itemize}
Let $(\mathcal{A}_t,\star)$ be a polarized quantization of $(M,\omega_0,P_0)$, and define: \\[-0.5cm]
\begin{itemize}
\item $\mathcal{F}(\mathcal{O}_t):=\big\{ f \in \mathcal{A}_t\,\big|\,\frac{1}{t}[f, \mathcal{O}_t]_\star \subset \mathcal{O}_t \big\}$, \\[-0.6cm]
\item $\mathcal{T}_{\mathcal{O}_t}:=\underline{\Hom}_{\mathcal{O}_t}(\Omega^1_{\mathcal{O}_t/\C}, \mathcal{O}_t)=\underline{{\rm Der}}_\C(\mathcal O_t,\mathcal O_t)$.
\end{itemize}
\begin{Prop}[\cite{BD},Proposition 3.7]\label{prop3.16}
The sequence
$$
0 \longrightarrow \mathcal{O}_t \longrightarrow \mathcal{F}(\mathcal{O}_t) \longrightarrow \mathcal{T}_{\mathcal{O}_t} \longrightarrow 0
$$
is a locally split exact sequence of locally free $\mathcal{O}_t$-modules and Lie algebras. 
Moreover there exists, in a neighbourhood of every $m\in M$, local sections $a_1,\cdots,a_n$ of $\mathcal{O}_t$ independent of $t$ and pairwise commuting 
local sections $f_1,\cdots,f_n$ of $\mathcal{A}_t$ such that: \\[-0.5cm]
\begin{itemize}
\item $da_i$ form a local basis of $P_t^\perp$. \\[-0.6cm]
\item $\frac{1}{t}[a_i,f_j]_\star= \delta_{ij}$.\\[-0.6cm]
\item $1,f_1,\cdots,f_n$ form a local basis of the $\mathcal{O}_t$-module $\mathcal{F}(\mathcal{O}_t)$. \\[-0.6cm]
\item The morphism $\mathcal{F}(\mathcal{O}_t)_{|U} \rightarrow \bigoplus_{i=1}^n (\mathcal{O}_t)_{|U}$ ($U$ being an open neighbourhood of $m$) defined by 
\begin{equation}\label{eq-map}
g\longmapsto\left(\frac{1}{t}[g,a_1]_\star,\cdots, \frac{1}{t}[g,a_n]_\star\right)
\end{equation}
descends to a local isomorphism of $\mathcal{O}_t$-modules $(\mathcal{T}_{\mathcal{O}_t})_{|U}\rightarrow \bigoplus_{i=1}^n (\mathcal{O}_t)_{|U}$.
\end{itemize}
\end{Prop}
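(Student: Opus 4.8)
The plan is to derive every assertion from a single local normal form and then read the sheaf-theoretic statements off it. First I would use the standing hypotheses to put the classical data into Darboux position adapted to the polarization: under either assumption (analyticity, or $P_0\cap\bar P_0$ of constant rank with $P_0+\bar P_0$ involutive) a Darboux-type theorem for polarizations — Newlander--Nirenberg together with the analytic Frobenius theorem in the analytic case — produces, near any $m\in M$, functions $q_1,\dots,q_n,p_1,\dots,p_n$ with $\omega_0=\sum dp_i\wedge dq_i$ and with $P_0$ spanned by the $\partial/\partial p_i$; equivalently $\mathcal O_0$ is exactly the sheaf of functions of the $q_i$. I will take $a_i:=q_i$ as the desired $t$-independent sections. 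They lie in $\mathcal O_t$ and their differentials $da_i$ give the advertised basis of $P_t^\perp$ (so that, locally, $P_t$ stays undeformed and cut out by the $da_i$), and they pairwise $\star$-commute because the product has a separation of variables along $\mathcal O_t$.

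Next I would construct the conjugate family $f_1,\dots,f_n$ by deforming along $t$. At order zero I set $f_j^{(0)}=p_j$, so that $\{a_i,f_j^{(0)}\}=\delta_{ij}$, the $t=0$ shadow of $\tfrac1t[a_i,f_j]_\star=\delta_{ij}$. Writing $\tfrac1t[a_i,\,\cdot\,]_\star=\{a_i,\,\cdot\,\}+t(\cdots)$, the leading term is the vertical field $\partial/\partial p_i$, which is surjective onto the relevant space of corrections; hence the equations $\tfrac1t[a_i,f_j]_\star=\delta_{ij}$ together with the commutativity $[f_i,f_j]_\star=0$ can be solved inductively in $t$, each stage being a linear system governed by this vertical derivative. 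Keeping the two families of equations compatible — maintaining $[f_i,f_j]_\star=0$ while fixing the brackets with the $a_i$ — is a Maurer--Cartan/obstruction computation whose obstructions live in a locally acyclic complex; this is where the hypotheses on $P_0$, through the local triviality of the underlying foliation, are used a second time.

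Finally I would extract the exact sequence and the isomorphism. Because $\mathcal O_t$ is commutative, $[a_i,\mathcal O_t]_\star=0$, so the map \eqref{eq-map} kills $\mathcal O_t$ and descends to $\mathcal T_{\mathcal O_t}=\mathcal F(\mathcal O_t)/\mathcal O_t$; concretely $g\mapsto\tfrac1t[g,\,\cdot\,]_\star|_{\mathcal O_t}$ is a derivation of $\mathcal O_t$ for $g\in\mathcal F(\mathcal O_t)$, with components $\tfrac1t[g,a_i]_\star$ in the coordinates $a_i$. Its kernel is the $\star$-centraliser of $\mathcal O_t$, which I claim equals $\mathcal O_t$ itself, since $\mathcal O_0$ is a maximal Poisson-commutative (Lagrangian) subsheaf and this maximality is inherited order by order in $t$; this gives exactness on the left and in the middle. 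Surjectivity, and the statement that $1,f_1,\dots,f_n$ is an $\mathcal O_t$-basis of $\mathcal F(\mathcal O_t)$ that splits the sequence, both follow from $f_j\mapsto\partial/\partial a_j$ realising a basis of $\mathcal T_{\mathcal O_t}$, while the Lie-algebra assertion is automatic as $\tfrac1t[\,\cdot\,,\,\cdot\,]_\star$ is a bracket preserving each term. The main obstacle is the simultaneous solvability in the second step: controlling the interaction between the normalisation $\tfrac1t[a_i,f_j]_\star=\delta_{ij}$ and the commutativity $[f_i,f_j]_\star=0$, and verifying that the relevant obstruction cohomology vanishes locally, which is precisely what the two alternative hypotheses on the polarization are there to guarantee.
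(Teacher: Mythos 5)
This proposition is not proved in the paper at all: it is quoted from Bressler--Donin \cite{BD} (their Proposition 3.7) as an external input, so there is no internal proof to compare your attempt against, and I can only assess your argument on its own terms.

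On its own terms there is a genuine gap, and it sits exactly where the standing hypotheses on $P_0$ (analyticity, or constant rank of $P_0\cap\bar P_0$ together with involutivity of $P_0+\bar P_0$) are supposed to do their work. You choose classical Darboux coordinates $q_i,p_i$ adapted to $P_0$ and then assert that the $q_i$ ``lie in $\mathcal{O}_t$,'' parenthetically concluding that $P_t$ is locally undeformed. This is backwards. Membership $q_i\in\mathcal{O}_t$ means $dq_i{}_{|P_t}=0$, and $P_t$ is an a priori nontrivial deformation of $P_0$: a function annihilated by $P_0$ has no reason to be annihilated by $P_t$. The first bullet of the proposition --- the existence of $t$-independent sections $a_i$ of $\mathcal{O}_t$ whose differentials span $P_t^\perp$ --- is precisely the statement that the deformation $P_t$ is locally trivial (equivalently, that reduction mod $t$ maps $\mathcal{O}_t$ locally onto $\mathcal{O}_0$, so that $\mathcal{O}_t\cong\mathcal{O}_0\llb t \rrb$ locally). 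That is a substantive rigidity statement, it is where the two alternative hypotheses must actually be used, and it cannot be obtained by a coordinate choice made at $t=0$. Everything downstream in your sketch silently relies on it: the identification of the kernel of \eqref{eq-map} with $\mathcal{O}_t$ (your ``maximality inherited order by order'' requires lifting elements of $\mathcal{O}_0$ to elements of $\mathcal{O}_t$), the local freeness of $\mathcal{T}_{\mathcal{O}_t}$ with basis the derivations dual to the $a_j$, and the order-by-order solvability of $\frac{1}{t}[a_i,f_j]_\star=\delta_{ij}$, whose leading operator you compute in the undeformed model. Without this rigidity step the proof does not start.

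Two smaller points. The $a_i$ pairwise $\star$-commute simply because $\mathcal{O}_t$ is, by definition of a polarized quantization, a commutative subsheaf of $\mathcal{A}_t$; ``separation of variables'' is not among the hypotheses of this proposition and should not be invoked. And the simultaneous construction of commuting $f_j$'s satisfying the normalization is only gestured at: the Jacobi-type compatibility identities that make each linear system solvable, and the vanishing of the obstructions to preserving $[f_i,f_j]_\star=0$, are the technical heart of the argument, which you flag as the main obstacle but do not address.
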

Such a local splitting then forms a \v{C}ech-$1$-cocycle with values in $\Omega_{\mathcal{O}_t}^{1,\text{cl}}$ and via the boundary homomorphism 
in cohomology of the short exact sequence
$$
0 \longrightarrow \underline{\C}\llb t \rrb \longrightarrow \mathcal{O}_t \longrightarrow \Omega_{\mathcal{O}_t}^{1,\text{cl}} \longrightarrow 0
$$
it defines a characteristic class $cl(\mathcal{A}_t,\mathcal{O}_t) \in H^2(M,\C\llb t \rrb)$. 
\begin{Thm}[\cite{BD},Theorem 4.6]\label{BDthm}
The characteristic class of a polarized deformation is 
$$
cl(\mathcal{A}_t,\mathcal{O}_t) - \frac{t}{2}c_1(P_0)\,.
$$
\end{Thm}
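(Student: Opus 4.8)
The plan is to compare the two classes by extracting both from the \emph{same} \v{C}ech data, namely the adapted local coordinates provided by Proposition \ref{prop3.16}. On each chart $U_\alpha$ of a suitable cover I would use the sections $a_1^\alpha,\dots,a_n^\alpha\in\mathcal{O}_t$ (independent of $t$, with $da_i^\alpha$ a local basis of $P_t^\perp$) together with the pairwise commuting conjugate sections $f_1^\alpha,\dots,f_n^\alpha$ satisfying $\frac1t[a_i^\alpha,f_j^\alpha]_\star=\delta_{ij}$ to identify $(\mathcal{A}_t)|_{U_\alpha}$ with a standard polarized model: a Moyal-Weyl quantization in Darboux coordinates $(q_i,p_i)=(a_i^\alpha,f_i^\alpha)$, in which $\mathcal{O}_t$ is exactly the subsheaf of series depending on the $q_i$ only. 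This reduces the statement to a purely local comparison of cocycles on overlaps.

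First I would extract the transition data. On $U_\alpha\cap U_\beta$ the two adapted frames are related by a self-equivalence $\phi_{\alpha\beta}$ of $\mathcal{A}_t$ preserving $\mathcal{O}_t$ and $P_t$; its classical limit is a symplectomorphism preserving the polarization, hence acting on the base coordinates by a diffeomorphism $q\mapsto\psi_{\alpha\beta}(q)$ and on the fibre coordinates linearly, with linear part $A_{\alpha\beta}$ the transition cocycle of $P_0$ (up to transpose-inverse). Writing $\theta$ for the symplectic characteristic class of $\mathcal{A}_t$, this $\theta$ is read off by lifting the $\phi_{\alpha\beta}$ through the central extension $1\to 1+t\underline{\C}\llb t\rrb\to 1+t\mathcal{A}_t\to\underline{\Aut}(\mathcal{A}_t)\to 1$ and measuring the resulting $2$-cocycle, whereas $cl(\mathcal{A}_t,\mathcal{O}_t)$ is by construction the image under the boundary map of $0\to\underline{\C}\llb t\rrb\to\mathcal{O}_t\to\Omega^{1,\text{cl}}_{\mathcal{O}_t}\to 0$ of the $1$-cocycle recording how the local potentials for $P_t^\perp$ fail to glue.

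The core of the argument is to show these two cocycles differ by exactly $\frac{t}{2}c_1(P_0)$. Lifting $\phi_{\alpha\beta}$ to an honest self-equivalence of the Moyal-Weyl model is unobstructed classically, but, because the fibre part acts as a linear symplectomorphism on the Weyl algebra, its quantization carries a metaplectic anomaly: the order-$t$ central term produced by the lift equals $\frac{t}{2}\,\operatorname{tr}(A_{\alpha\beta}^{-1}\,dA_{\alpha\beta})=\frac{t}{2}\,d\log\det A_{\alpha\beta}$. I would verify this by writing the generator of the lift as a normal-ordered quadratic element and comparing it to the symmetric (Weyl) ordering, the discrepancy being precisely the half-trace of the linear part. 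Assembling $d\log\det A_{\alpha\beta}$ into a \v{C}ech representative then identifies the difference $cl(\mathcal{A}_t,\mathcal{O}_t)-\theta$ with $\frac{t}{2}$ times the first Chern class of the determinant line bundle of $P_0$, which is exactly the asserted formula $\theta=cl(\mathcal{A}_t,\mathcal{O}_t)-\frac{t}{2}c_1(P_0)$.

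I expect the metaplectic computation in the last step to be the main obstacle: one must track the factor $\frac12$, the sign, and the passage from the local $\operatorname{tr}(A_{\alpha\beta}^{-1}dA_{\alpha\beta})$ to the global class $c_1(P_0)$ with care, and must check that the ordering ambiguity contributes only this central (scalar) term and no further non-central corrections that would spoil the comparison. The two regularity hypotheses on $(M,\omega_0,P_0)$ enter here, since they guarantee that the adapted coordinates and their quantized transitions exist and are well behaved enough for the local models to be glued.
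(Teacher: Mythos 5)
The paper does not prove this statement at all: Theorem \ref{BDthm} is imported verbatim from Bressler--Donin \cite{BD} (their Theorem 4.6), with the separation-of-variables case due earlier to Karabegov \cite{Ka}. So there is no ``paper's own proof'' to compare against, and your proposal has to be judged on its own merits. In outline you have identified the correct mechanism, and it is essentially the mechanism used in the cited literature: reduce to local polarized normal forms, read the class $\theta$ off from the non-abelian boundary map attached to $1 \to 1+t\underline{\C}\llb t \rrb \to 1+t\mathcal{A}_t \to \underline{\Aut}(\mathcal{A}_t) \to 1$, read $cl(\mathcal{A}_t,\mathcal{O}_t)$ off from the gluing of the splittings of $0 \to \mathcal{O}_t \to \mathcal{F}(\mathcal{O}_t) \to \mathcal{T}_{\mathcal{O}_t} \to 0$, and locate their discrepancy in the ordering anomaly of the fibre-linear part of the transition automorphisms, $\frac{t}{2}\,d\log\det A_{\alpha\beta}$, which assembles to $\frac{t}{2}c_1(P_0)$. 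The constant $\frac12$ and the identification with $c_1$ of the determinant line of $P_0$ are the right ones.

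\textbf{The gaps.} As a proof, however, the proposal is incomplete in three concrete places. First, Proposition \ref{prop3.16} hands you adapted sections $a_i, f_j$ with $\frac1t[a_i,f_j]_\star=\delta_{ij}$, but it does not by itself produce an isomorphism of $(\mathcal{A}_t,\mathcal{O}_t)_{|U}$ with a standard polarized Moyal--Weyl model; that requires a local uniqueness (rigidity) theorem for \emph{polarized} quantizations, which is a separate and substantial result in \cite{BD}. Without it, the entire reduction to ``transition automorphisms of one fixed local model'' is unjustified. Second, a polarization-preserving symplectomorphism acts on the fibre coordinates \emph{affinely}, $p \mapsto A(q)p + b(q)$, not linearly; the shift $b$ is precisely what carries the $cl(\mathcal{A}_t,\mathcal{O}_t)$ part of the cocycle, and you must show that after lifting, the contributions of $b$ and of the linear part $A$ separate cleanly on triple overlaps -- i.e.\ that the affine part reproduces exactly the boundary of the $\Omega^{1,\text{cl}}_{\mathcal{O}_t}$-valued cocycle defining $cl(\mathcal{A}_t,\mathcal{O}_t)$ and produces no cross terms with the metaplectic term. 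This disentangling is the heart of the comparison and is not addressed. Third, the half-trace computation itself -- that the lift of the fibre-linear part contributes a \emph{central} term equal to $\frac{t}{2}\operatorname{tr}(A_{\alpha\beta}^{-1}dA_{\alpha\beta})$ and nothing more -- is exactly what you defer to the end; it is the theorem, not a detail. Note also that in the situation the paper actually needs (complex polarizations on coadjoint orbits, where $P_0\cap\bar P_0=0$), the $a_i$ are complex-valued and the local model is of Wick type rather than the real cotangent-bundle picture your sketch implicitly uses; the two regularity hypotheses in the statement are what make such local normal forms exist, and a complete argument must handle this complex case, not only the real one.
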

This result had been previsouly proved in \cite{Ka} for $\star$-products with separation of variables, which we define in the next Subsection. 

\subsection{Separation of variables}

Let $(M,\omega_0)$ be a symplectic manifold together with two transverse polarizations $P_0,Q_0\subset T^\C M$: more precisely 
we assume that the map $\mathcal{O}_0\otimes \widetilde{\mathcal{O}}_0\rightarrow C^\infty_M$ has dense image on $\infty$-jets at every point, where 
$\mathcal O_0=\big\{f \in C^\infty_M \, \big| \, df_{|P_0} = 0 \big\}$ and $\widetilde{\mathcal{O}}_0=\big\{f \in C^\infty_M \, \big| \, df_{|Q_0} = 0 \big\}$. 
We say that $M$ has a {\it separation of variables}. 
\begin{Exa}[Semi-simple coadjoint orbits]
We borrow the notation from Example \ref{exa-ssca}. The $\h^\C$-module $\mathfrak{n}_+=\sum_{\alpha\in\Delta_+}e_\alpha$ 
induces another polarization which is transverse to the one induced by $\mathfrak{n}_-$. Together they provide a separation of variables on $\cO$. 
\end{Exa}
\begin{Def}
A deformation quantization of a symplectic manifold with separation of variables $(M,\omega_0,P_0,Q_0)$ is a triple $(\mathcal A_t,\mathcal O_t,\widetilde{\mathcal O}_t)$ 
such that: \\[-0.5cm]
\begin{itemize}
	\item $(\mathcal A_t,\mathcal O_t)$ is a deformation quantization of $(M,\omega_0,P_0)$. \\[-0.6cm]
	\item $(\mathcal A_t,\widetilde{\mathcal O}_t)$ is a deformation quantization of $(M,\omega_0,Q_0)$. \\[-0.6cm]
	\item $f \star g = fg$ for $f \in \mathcal{O}_t$ and $g \in C^\infty_M$. \\[-0.6cm]
	\item $f \star g = fg$ for $f \in C^\infty_M$ and $g \in \widetilde{\mathcal{O}}_t$. 
\end{itemize}
\end{Def}


\section{Alekseev-Lachowska $\star$-product}

Let $\g$ be a semi-simple Lie algebra and $\lambda \in \g^*$ a semi-simple element. 
Following \cite{AL} we construct a $\star$-product on $\mathcal{O}_\lambda$.
Let $\h = \{ x \in \g \big| \, x.\lambda = 0\}$ be the subalgebra of $\g$ fixing $\lambda$. 
If $\lambda$ is regular (i.e.~$\dim \Fix_\lambda$ is minimal among all $\lambda \in \g^*$) then $\h^C$ is a Cartan subalgebra of $\g^\C$, 
else there exists a Cartan subalgebra in $\h^\C$. Borrowing the notation from Example \ref{exa-ssca} we get a root space decomposition
$$
\g^\C = \mathfrak{n}_- \oplus \h^\C \oplus \mathfrak{n}_+ 
= \bigoplus_{\alpha \in \Delta_+} \C e_{-\alpha} \oplus \h^\C \oplus \bigoplus_{\alpha \in \Delta_+} \C e_{\alpha}\,.
$$
Furthermore, we can normalize the root vector $e_\alpha$ such that
$$
\omega(e_{-\alpha} , e_{\beta}) = \lambda\big([e_{-\alpha}, e_{\beta}]\big) = \delta_{\alpha, \beta}\,.
$$
Set $\mathfrak{p}_\pm = \h^\C \oplus \mathfrak{n}_\pm$ and define the generalized Verma modules
$$
M_\lambda^\pm = U(\g^\C) \otimes_{U(\mathfrak{p}_\pm)} \C_{\pm\lambda}\,.
$$
Observe that $M_\lambda^+ \otimes_{U(\g^\C)} M_\lambda^-$ can be canonically identified with $\C$, therefore there exists a unique $\g^\C$-invariant pairing 
$(\cdot,\cdot)_\lambda:M_\lambda^+ \otimes M_\lambda^-\longrightarrow\C$, called the {\it Shapovalov pairing}, which we now decribe in more explicit terms. 
Using the PBW decomposition $U(\g^\C) \cong U(\mathfrak{n}_-) \otimes U(\mathfrak{h}^\C)\otimes U(\mathfrak{n}_+)$ one can define a projection 
$\phi: U(\g^\C) \rightarrow U(\mathfrak{h}^\C)$ by means of $\epsilon\otimes id\otimes\epsilon$: for any $x \in U(\g^\C)$, 
$$
x = \underbrace{\phi(x)}_{\in U(\mathfrak{h}^\C)} + \underbrace{x_-}_{\in n_-U(\g^\C)} + \underbrace{x_+}_{\in U(\g^\C) n_+}\,.
$$
Let $\nu_\lambda$, $\nu_{-\lambda}$ denote the generators of $M_\lambda^+$ and $M_\lambda^-$, respectively. For $x\in U(\mathfrak{n}_-)$ and $y \in U(\mathfrak{n}_+)$,
\begin{eqnarray*}
	x.\nu_\lambda \otimes y.\nu_{-\lambda} 
	& = &  S(y)x.\nu_\lambda \otimes \nu_{-\lambda}
	  =\big(\phi(S(y)x) + (S(y)x)_- + (S(y)x)_+\big).\nu_\lambda \otimes \nu_{-\lambda} \\
	& = & \big(\phi(S(y)x) + (S(y)x)_+\big).\nu_\lambda \otimes \nu_{-\lambda} \\
	& = & \phi(S(y)x).\nu_\lambda \otimes \nu_{-\lambda} + \nu_\lambda \otimes S((S(y)x)_+).\nu_{-\lambda} \\
	& = & \lambda(\phi(S(y)x)).\nu_\lambda \otimes \nu_{-\lambda}\,,
\end{eqnarray*}
where $S$ denotes the antipode on $U(\g^\C)$. It follows that $(x.\nu_\lambda , y.\nu_{-\lambda}) = \lambda\big(\phi(S(y)x)\big)$. 
By identifying $U(\mathfrak{n}_-) \cong M_\lambda^+$ and $U(\mathfrak{n}_+) \cong M_\lambda^-$ we get a family of pairings 
$(\cdot,\cdot)_\lambda:U(\mathfrak{n}_-)\otimes U(\mathfrak{n}_+) \rightarrow \C$ parametrized by characters on $\mathfrak{h}^\C$.
As is shown in \cite[Proposition 3.1]{AL} this pairing is non-degenerate for all $\lambda$ in the complement of a countable subset of $\h^\C$. 
Moreover, it respects the grading in the root lattice, implying the inverse pairing is given by an element 
$F_\lambda \in U(\mathfrak{n}_-)\widehat{\otimes} U(\mathfrak{n}_+)$ in the completed tensor product. Furthermore \cite{AL} shows that $F_\lambda$ is holomorphic 
at infinity, enabling us to define
$$
B := F_{\frac{\lambda}{t}} \in U(\mathfrak{n}_-)\widehat{\otimes} U(\mathfrak{n}_+)\llb t \rrb\,.
$$

\medskip

We now define a $\star$-product on $\mathcal{O}_\lambda$ as follows. Let $\pi: G \rightarrow \mathcal{O}_\lambda\,,\,g\mapsto g.\lambda$ denote the canonical projection 
and let $\g$ act on functions on $G$ by left-invariant vector fields: for $f \in C^\infty(G,\C)$ and $x \in \g$, 
$$
(x.f)(g) = \frac{d}{dt}_{\big|_{t=0}}\Big(f(g e^{t x})\Big)\,.
$$
Then, for $f,g \in C^\infty(\mathcal{O}_\lambda,\C)$, we define
$$
f \star g = m\Big(B.(\pi^*f \otimes \pi^*g)\Big)\in C^\infty(G,\C) \llb t \rrb\,,
$$
where $m$ denotes the usual multiplication of functions. We finally have the following: 
\begin{Thm}[Alekseev-Lachowska, \cite{AL}]
The product $\star$ indeed provides a deformation quantization of $\mathcal{O}_\lambda$. In particular, \\[-0.5cm]
\begin{itemize}
	\item $f \star g$ is $H$-invariant and thus defines a function on $\mathcal{O}_\lambda$. \\[-0.6cm]
	\item $\star$ is associative. \\[-0.6cm]
	\item $B = 1 + t \sum_{\alpha \in \Delta_+} e_{-\alpha} \otimes e_{\alpha} + O(t^2)$, and thus $[\cdot,\cdot]_\star = t \{\cdot,\cdot \} + O(t^2)$.
\end{itemize}
\end{Thm}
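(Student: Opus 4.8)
The statement has three parts, and I would organize the proof around verifying each property of the operator $B = F_{\lambda/t}$ in turn, working on the level of functions on $G$ and then descending to $\mathcal{O}_\lambda$. The underlying principle is that $F_\lambda \in U(\mathfrak{n}_-)\widehat{\otimes} U(\mathfrak{n}_+)$ is the inverse of the Shapovalov pairing, and its algebraic properties (invariance, the defining inversion identity) should translate directly into the analytic and associativity statements through the action by left-invariant vector fields.

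\emph{First}, for the $H$-invariance and well-definedness on $\mathcal{O}_\lambda$: I would exploit that $F_\lambda$ arises from a $\g^\C$-invariant pairing, so it is in particular $\h^\C$-invariant. Since $\pi: G \to \mathcal{O}_\lambda$ is the quotient by $H$ acting on the left, one checks that $\pi^* f$ for $f \in C^\infty(\mathcal{O}_\lambda)$ is a right-$H$-invariant function, and the left-invariant vector field action of elements of $\mathfrak{n}_\pm$ interacts with the $H$-action via the $\h^\C$-invariance of $F_\lambda$. The key computation is that $m\big(B.(\pi^* f \otimes \pi^* g)\big)$ remains right-$H$-invariant, hence descends to a genuine function on $\mathcal{O}_\lambda$; this uses that $\mathfrak{n}_\pm$ is $H$-stable (established already in Example~\ref{exa-ssca}) together with invariance of $F_\lambda$ under the diagonal $\h^\C$-action.

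\emph{Second}, for associativity, I would reduce it to an identity satisfied by $F_\lambda$ in the completed tensor product. Writing out $(f \star g) \star h$ and $f \star (g \star h)$, both become expressions of the form $m\big((\text{some element of } U(\g^\C)^{\widehat{\otimes} 3}).(\pi^* f \otimes \pi^* g \otimes \pi^* h)\big)$, and associativity amounts to the equality of the two resulting three-fold operators, i.e.\ a twist-type cocycle equation $(F_\lambda \otimes 1)(\Delta \otimes \mathrm{id})(F_\lambda) = (1 \otimes F_\lambda)(\mathrm{id} \otimes \Delta)(F_\lambda)$ relating the coproduct and $F_\lambda$. This is precisely the content established in \cite{AL}, where $F_\lambda$ is shown to define an associative product via the inverse Shapovalov form; I would cite that inversion property and the associativity identity it satisfies rather than rederive it. The semiclassical normalization $\lambda \mapsto \lambda/t$ is what produces the $\llb t \rrb$-structure.

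\emph{Third}, for the leading-order expansion, I would compute $F_\lambda$ to first order. Since $F_\lambda$ inverts the Shapovalov pairing, and on the weight-$\alpha$ root line the pairing is $(e_{-\alpha}.\nu_\lambda, e_\alpha.\nu_{-\lambda})_\lambda = \lambda(\phi(S(e_\alpha) e_{-\alpha}))$, I would evaluate this using $S(e_\alpha) = -e_\alpha$ and $\phi(e_\alpha e_{-\alpha}) = \phi([e_\alpha, e_{-\alpha}]) = h_\alpha$, giving a leading diagonal term; after the substitution $\lambda \mapsto \lambda/t$ and the normalization $\omega(e_{-\alpha}, e_\beta) = \delta_{\alpha\beta}$, the inverse at first order is $1 + t\sum_{\alpha \in \Delta_+} e_{-\alpha} \otimes e_\alpha + O(t^2)$. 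Translating back to operators on functions, the antisymmetric part of the order-$t$ term reproduces the Poisson bracket $\{\cdot,\cdot\}$ induced by $\omega$, yielding $[\cdot,\cdot]_\star = t\{\cdot,\cdot\} + O(t^2)$.

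\emph{Main obstacle.} The hard part will be making the associativity identity fully rigorous in the \emph{completed} tensor product $U(\mathfrak{n}_-)\widehat{\otimes}U(\mathfrak{n}_+)\llb t \rrb$, where one must control convergence of the infinite sums defining $F_{\lambda/t}$ and justify that the twist equation holds order-by-order in $t$. The cleanest route is to lean on the holomorphy-at-infinity of $F_\lambda$ proved in \cite{AL}, which guarantees that the substitution $\lambda \mapsto \lambda/t$ produces well-defined power series in $t$, so that all the formal manipulations are legitimate degree by degree.
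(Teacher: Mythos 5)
The paper itself gives no proof of this Theorem: it is quoted verbatim from \cite{AL}, and the relevant technology from that reference is precisely the uniqueness statement reproduced later in the paper as Lemma \ref{3:uniq} and exploited in the proof of Lemma \ref{comp}. Measured against that, your first and third steps are essentially sound: right-$H$-invariance of $m\big(B.(\pi^*f\otimes\pi^*g)\big)$ does follow from the diagonal $\ad(\h^\C)$-invariance of $B$ (the inverse of a pairing that respects the root-lattice grading has total weight zero) together with $H$-stability of $\mathfrak{n}_\pm$ (minor slip: $\pi\colon G\to\mathcal{O}_\lambda$ is the quotient by the \emph{right} $H$-action, as you in fact use a line later); and your first-order computation of $B$ and of the induced bracket is correct.

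The genuine gap is in your second step. The identity you propose to cite, $(F_\lambda\otimes 1)(\Delta\otimes\id)(F_\lambda)=(1\otimes F_\lambda)(\id\otimes\Delta)(F_\lambda)$ in $U(\g^\C)^{\widehat{\otimes}3}$, is false, and it is not what \cite{AL} establishes. If it held, then $m\big(B.(\cdot\otimes\cdot)\big)$ would be an associative product on \emph{all} of $C^\infty(G)\llb t\rrb$, whose first-order antisymmetric part is the left-invariant bivector field on $G$ determined by $r=\sum_{\alpha\in\Delta_+}e_{-\alpha}\wedge e_\alpha$; associativity would force this bivector to satisfy the Jacobi identity, but $[r,r]\in\Lambda^3\g^\C$ is a nonzero invariant element (already for $\g=\mathfrak{sl}_2$, where $[f\wedge e,f\wedge e]=2\,h\wedge f\wedge e\neq 0$) — $r$ only solves the \emph{modified} classical Yang--Baxter equation. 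The inverse Shapovalov element is a \emph{dynamical} twist: it satisfies a shifted cocycle identity in which one factor occurs with $\lambda$ replaced by $\lambda$ minus the $\h$-weight acting in another tensor slot. Hence associativity cannot be decoupled from $H$-invariance the way your plan assumes: it holds only on $\pi^*C^\infty(\mathcal{O}_\lambda)\llb t\rrb$, where left-invariant vector fields from $\h$ act by zero and the dynamical shift disappears. The clean route — the one taken in \cite{AL} and mirrored in this paper's proof of Lemma \ref{comp} — avoids any twist equation: both bracketings of $f\star g\star h$ arise from $U(\mathfrak{n}_+)$-invariant elements of the form $\nu_{\lambda/t}\otimes w+U(\mathfrak{n}_-)_{>0}.\nu_{\lambda/t}\widehat{\otimes}(\cdots)$ with the same $w$, so they coincide by Lemma \ref{3:uniq}. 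Finally, what you flag as the "main obstacle" (convergence under $\lambda\mapsto\lambda/t$) is exactly the point settled once and for all by the holomorphy-at-infinity result of \cite{AL}; the real difficulty is the one above.
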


\subsection{Properties of the Alekseev-Lachowska $\star$-product}

The $\star$-product we have just defined has the following additional properties: 
\begin{Prop}\label{mainprop}
It is polarized, with the commutative subalgebra being 
$$
\mathcal{O}_t := \{ f \in C^\infty_{\mathcal O_\lambda} \llb t \rrb \, \big|\, e_\alpha.f = 0 \ \ \forall \alpha \in \Delta_-\}=\mathcal O_0\llb t\rrb\,.
$$
Moreover it has a strong quantum momentum map.
\end{Prop}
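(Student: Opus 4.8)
\medskip

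\noindent\emph{Polarized structure.} I would first record that the subsheaf in the statement is literally $\mathcal{O}_0\llb t\rrb$ with an \emph{undeformed} polarization $P_t=P_0$. Indeed, the vectors $e_\alpha$ with $\alpha\in\Delta_-$ span $\mathfrak{n}_-$, and acting as left-invariant vector fields they span the involutive subbundle $P_0$ of Example \ref{exa-ssca}; the condition $e_\alpha.f=0$ for all $\alpha\in\Delta_-$ is $t$-linear and says precisely that $\pi^*f$ is annihilated by $\mathfrak{n}_-$, i.e. that $df_{|P_0}=0$. Thus $\mathcal P_t=\mathcal P_0$ (locally free, $\mathcal P_t/(t)\cong\mathcal P_0$ trivially), and the only substantive point is that $\mathcal{O}_t$ is a commutative subalgebra for $\star$. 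I would prove the stronger assertion that $f\star g=fg$ whenever $f\in\mathcal{O}_t$ and $g$ is arbitrary, which simultaneously settles commutativity, closedness ($fg$ is again $\mathfrak{n}_-$-annihilated), and the third axiom of a separation of variables.

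\medskip

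\noindent\emph{The key computation.} Write $B=\sum B'\otimes B''$ with $B'\in U(\mathfrak{n}_-)$ and $B''\in U(\mathfrak{n}_+)$, so that $f\star g=\sum (B'.\pi^*f)\,(B''.\pi^*g)$. If $\mathfrak{n}_-.\pi^*f=0$ then $u.\pi^*f=\epsilon(u)\,\pi^*f$ for every $u\in U(\mathfrak{n}_-)$: the augmentation ideal is $U(\mathfrak{n}_-)\mathfrak{n}_-$, and the innermost $\mathfrak{n}_-$-factor annihilates $\pi^*f$. Hence $f\star g=\pi^*f\cdot\big((\epsilon\otimes\id)B\big).\pi^*g$, and it remains to identify $(\epsilon\otimes\id)B=1$. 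This follows from the fact, already used in \cite{AL}, that the Shapovalov pairing respects the root-lattice grading: its inverse decomposes as $F_\lambda=\sum_{\mu\ge 0}F_\lambda^{(\mu)}$ with $F_\lambda^{(\mu)}\in U(\mathfrak{n}_-)_{-\mu}\otimes U(\mathfrak{n}_+)_{+\mu}$ and weight-zero term $1\otimes 1$; applying $\epsilon\otimes\id$ kills every $\mu\neq 0$ summand (as $U(\mathfrak{n}_-)_{-\mu}\subset\ker\epsilon$) and leaves $1$. Substituting $\lambda/t$ for $\lambda$ gives $(\epsilon\otimes\id)B=1$, so $f\star g=\pi^*(fg)$, as wanted.

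\medskip

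\noindent\emph{Quantum momentum map.} For $x\in\g$ I would take the component $\mu_x\in C^\infty(\mathcal{O}_\lambda)$ of the classical momentum map, $\mu_x(\xi)=\langle\xi,x\rangle$, which satisfies $\{\mu_x,\cdot\}=x^\sharp$ and $\{\mu_x,\mu_y\}=\mu_{[x,y]}$, where $x^\sharp$ is the fundamental vector field of the $G$-action. The first observation is that $\star$ is $G$-covariant: $x^\sharp$ lifts to the right-invariant vector field on $G$, which commutes with all the left-invariant vector fields out of which $B$ is built, and since $m$ is equivariant this gives $x^\sharp(f\star g)=(x^\sharp f)\star g+f\star(x^\sharp g)$, so each $x^\sharp$ is a $\star$-derivation. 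The heart of the matter is then the exact identity $\frac{1}{t}[\mu_x,\cdot]_\star=x^\sharp$, which I would extract from the $\g^\C$-invariance of the copairing $F_\lambda$ under the diagonal action (the dual of the defining invariance $(z.m^+,m^-)_\lambda+(m^+,z.m^-)_\lambda=0$): unwound against $\pi^*\mu_x$ and $\pi^*f$, this invariance should make the higher-order terms of $\mu_x\star f-f\star\mu_x$ telescope to $t\,x^\sharp f$. Granting it, strongness is immediate, since $\frac{1}{t}[\mu_x,\mu_y]_\star=x^\sharp\mu_y=\mu_{[x,y]}$ exhibits $x\mapsto\mu_x$ as a Lie homomorphism into $\big(\mathcal{A}_t,\frac{1}{t}[\cdot,\cdot]_\star\big)$.

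\medskip

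\noindent I expect the last identity to be the main obstacle: translating the algebraic invariance of $F_\lambda$, where the module actions on $M_\lambda^\pm$ are twisted by the PBW projection $\phi$, into the clean geometric statement that $\mu_x$ generates $x^\sharp$ on the nose. Should a direct derivation prove too delicate, I would fall back on a cohomological argument that still yields a strong momentum map: covariance forces $\frac{1}{t}[\mu_x,\cdot]_\star-x^\sharp$ to be an $O(t)$ $\star$-derivation, which is inner because $H^1(\mathcal{O}_\lambda,\C)=0$ for the simply connected $\mathcal{O}_\lambda=G/H$, producing a corrected $\hat\mu(x)=\mu_x+O(t)$ with $\frac{1}{t}[\hat\mu(x),\cdot]_\star=x^\sharp$; the residual failure of the homomorphism property is then a central $2$-cocycle $\wedge^2\g\to\C\llb t\rrb$, hence a coboundary since $H^2(\g,\C)=0$ for $\g$ semisimple, and absorbing it makes $\hat\mu$ strong.
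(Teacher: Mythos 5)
Your first half (polarization and separation of variables) is correct and is essentially the paper's own argument made explicit: the paper dismisses this part as obvious from $B\in U(\mathfrak{n}_-)\widehat\otimes U(\mathfrak{n}_+)\llb t\rrb$, and your observation that $(\epsilon\otimes\id)B=1$ by the root-lattice grading of the Shapovalov pairing, hence $f\star g=fg$ for $f\in\mathcal{O}_t$ and arbitrary $g$, is precisely the content behind that remark.

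The gap is in the momentum map part, which is the real substance of the Proposition. Your key identity $\frac{1}{t}[\mu_x,\cdot]_\star=x^\sharp$ is never proved: you only express the hope that $\g^\C$-invariance of $F_{\lambda/t}$ makes the higher-order terms ``telescope''. Invariance alone does not do this. The paper's proof needs two further ingredients, both of which are absent from your outline: (i) the uniqueness statement of Lemma \ref{3:uniq} (from \cite{AL}, relying on non-degeneracy of the Shapovalov pairing), namely that an $\mathfrak{n}_+$-invariant element of $M^+_{\lambda/t}\widehat\otimes\mathcal{V}$ with prescribed leading term $\nu_{\lambda/t}\otimes w$ is unique --- this is what converts the (correct) observation that $B.(\nu_{\lambda/t}\otimes\lambda)$ is $\mathfrak{n}_+$-invariant into an actual computation of that element; and (ii) the explicit structure-constant verification that the finite candidate $\nu_{\lambda/t}\otimes\lambda-t\sum_\alpha e_{-\alpha}.\nu_{\lambda/t}\otimes e_\alpha.\lambda$ is itself $\mathfrak{n}_+$-invariant (the computation at the end of the proof of Lemma \ref{comp}). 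The collapse of the a priori infinite sum $B.(\nu_{\lambda/t}\otimes\lambda)$ to two terms also uses that $\lambda$ sits inside the finite-dimensional coadjoint module $\g^*$, via the evaluation morphism $ev_h$ which the paper constructs and your sketch does not.

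Your fallback does not rescue the statement: it proves a strictly weaker one. Correcting $\mu_x$ to $\hat\mu(x)=\mu_x+O(t)$ with $\frac{1}{t}[\hat\mu(x),\cdot]_\star=x^\sharp$ yields a quantum momentum map, but the Proposition, as defined in \eqref{eq-sqmm}, demands the identity for the classical functions $h_{|\cO}$ themselves, with no $O(t)$ correction. This distinction matters downstream: the $t$-independence of the local splitting (hence Corollary \ref{cor-3.8} and the characteristic class computation) uses $\frac{1}{t}[h_{|\cO},a_i]_\star=\{h_{|\cO},a_i\}$ with the undeformed $h_{|\cO}$. What your fallback establishes is essentially the paper's final Lemma of Section 3 --- that for compact $G$ any invariant $\star$-product is \emph{equivalent} to one with the strong property --- which is not the claim being proved here.
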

\begin{proof}
The first part is obvious since $B\in U(\mathfrak{n}_-)\widehat\otimes U(\mathfrak{n}_+)\llb t \rrb$. 
For the second part, recall that a $\star$-product on $\mathcal O_\lambda$ has a {\it strong quantum moment map} if, for any 
$h\in\mathfrak g^\C\subset C^\infty(\mathfrak g^*,\C)$, we have 
\begin{equation}\label{eq-sqmm}
[h_{|\mathcal O_\lambda},\cdot]_\star=t\{h_{|\mathcal O_\lambda},\cdot\}\,,
\end{equation}
and remember the following: 
\begin{Lem}[\cite{AL} Propostion 4.1]\label{3:uniq}
Let $\mathcal{V}$ be a $U(\g^\C)$-module and $w \in \mathcal{V}$. If $\lambda\in\mathfrak g^*$ is such that the Shapovalov pairing is non-degenerate, then there is at 
most one $U(\mathfrak{n}_+)$-invariant element $z\in M_\lambda^+\widehat{\otimes}\mathcal V$ of the form 
$z \in \nu_{\lambda} \otimes w + U(\mathfrak{n}_-)_{> 0}.\nu_{\lambda} \widehat{\otimes} \mathcal{V}$.
\end{Lem}
\begin{proof}[Proof of Lemma \ref{3:uniq}]
Let $(y_s)_s$ be a basis of $U(\mathfrak{n}_+)_{>0}$, which we choose to be homogeneous (w.r.t.~the root lattice grading). 
Since $(\cdot,\cdot)_\lambda$ is homogeneous and non-degenerate, then we can find a basis $(x_s)_s$ of $U(\mathfrak{n}_-)_{>0}$ such that 
$\lambda\big(\phi(y_lx_s)\big) = \delta_{ls}$. Now let $z$ be of the form
$$
z = \nu_{\lambda} \otimes w + \sum_s x_s.\nu_{\lambda} \otimes w_s\,.
$$
By $U(\mathfrak{n}_+)$-invariance we have, for any $l$, 
$$
0 = y_l.z=\nu_{\lambda} \otimes y_l.w + \sum_s x_s.\nu_{\lambda} \otimes y_l.w_s + \sum_s (y_l x_s).\nu_{\lambda} \otimes w_s.
$$
The homogeneous part of degree $0$ in $(y_l x_s).v_\lambda$ is $\phi(y_l x_s).v_\lambda=\lambda\big(\phi(y_l x_s)\big)v_\lambda$. Hence we have 
$$
- y_l.w = \sum_s \lambda\big(\phi(y_lx_s)\big) w_s = w_l\,.
$$
This proves the Lemma. 
\end{proof}
Let now $f_h:=h_{|\mathcal O_\lambda}$, pull it back to a function on $G$ and consider the action of $x \in \g$:
\begin{eqnarray*}
(x.\pi^*f_h) (g) & = & \frac{d}{dt}_{\big|_{t=0}}\Big((g e^{t x}).\lambda\Big)(h) = \frac{d}{dt}_{\big|_{t=0}}\Big(\lambda\big((g e^{t x})^{-1}.h\big)\Big) \\
		 & = & \lambda(-[x,g^{-1}.h]) = (x.\lambda)(g^{-1}.h)\,.
\end{eqnarray*}
It follows that the evaluation $ev_h$ at $g^{-1}.h$ defines a morphism of $\g$-modules from the $\g$-submodule of $\g^*$ generated by $\lambda$ to the $\g$-submodule 
of $C^\infty(\mathcal O_\lambda,\C)$ generated by $f_h$. 
Hence 
$$
f \star f_h=m\circ(id\otimes ev_h)\big(B.(f\otimes \lambda)\big)\,.
$$
\begin{Lem}\label{comp}
$f\star f_h=m(f,f_h)-t\sum_\alpha m(e_{-\alpha}.f,e_{\alpha}.f_h)$. 
\end{Lem}
\begin{proof}[Proof of Lemma \ref{comp}]
It is sufficient to show that 
$$
B.(\nu_{\frac{\lambda}{t}}\otimes\lambda)=\nu_{\frac{\lambda}{t}}\otimes\lambda-t\sum_\alpha e_{-\alpha}.\nu_{\frac{\lambda}{t}}\otimes e_{\alpha}.\lambda
\in M^+_{\frac{\lambda}t}\otimes\g^*\,.
$$
The l.h.s.~is the image of $F_{\frac{\lambda}t}$ under the following morphism of $U(\mathfrak{n}_+)$-modules: 
$$
M_{\frac{\lambda}t}^+ \widehat{\otimes} M_{\frac{\lambda}t}^- \longrightarrow M_{\frac{\lambda}t}^+ \widehat{\otimes} \g^*\,,\,
y.\nu_{\frac{\lambda}t}\otimes x.\nu_{-\frac{\lambda}t}\longmapsto y.\nu_{\frac{\lambda}t}\otimes x.\lambda\,.
$$
As such it is clearly $U(\mathfrak{n}_+)$-invariant. 
Hence by Lemma \ref{3:uniq} it is enough to show that the r.h.s., which we denote $\psi$, is $\mathfrak n_+$-invariant. Indeed, for any $\beta$, 
\begin{eqnarray*}
	e_{\beta}.\psi & = & \nu_\frac{\lambda}{t}\otimes e_{\beta}.\lambda - t \sum_\alpha 
	\Big(e_\beta e_{-\alpha}.\nu_\frac{\lambda}{t} \otimes e_\alpha.\lambda+e_{-\alpha}.\nu_\frac{\lambda}{t}\otimes e_\beta e_\alpha.\lambda\Big)\\
	& = & \nu_\frac{\lambda}{t}\otimes
	      \Big(\underbrace{e_\beta-t\sum_{\alpha,[e_\beta,e_{-\alpha}] \in \mathfrak{h}}\frac{\lambda}{t}([e_\beta,e_{-\alpha}]) e_\alpha}_{=0}\Big).\lambda\\
	&& \, -t\sum_\alpha\Big(e_{-\alpha}.\nu_\frac{\lambda}{t}\otimes e_\beta e_\alpha
	      +\sum_{\gamma}c_{\beta,-\alpha}^\gamma e_\gamma.\nu_{\frac{\lambda}t}\otimes e_\alpha\Big).\lambda \\
	& = & \, -t\sum_{\gamma}e_\gamma.\nu_{\frac{\lambda}t}\otimes\Big(e_\beta e_{-\gamma}+c_{\beta,\gamma-\beta}^{\gamma}e_{\beta-\gamma}\Big).\lambda\,.
\end{eqnarray*}
The last line vanishes because 
$\Big(e_\beta e_{-\gamma}+c_{\beta,\gamma-\beta}^{\gamma}e_{\beta-\gamma}\Big).\lambda
=\Big(e_\beta e_{-\gamma}+\lambda([e_{-\gamma},[e_\beta,e_{\gamma-\beta}])e_{\beta-\gamma}\Big).\lambda$, 
which is zero. 
\end{proof}
\noindent{\it End of the proof of Proposition \ref{mainprop}.} By Lemma \ref{comp} and its obvious analogue for $f_h$ sitting in the first argument we get
$f_h \star f - f \star f_h = t \sum_\alpha (e_{-\alpha} \wedge e_\alpha).(f_h \otimes f)$, proving the condition for a strong quantum momentum map.
\end{proof}
\begin{Rem}
Lemma \ref{comp} can be interpreted in analogy with geometric quantization as follows. Let us pretend that $\frac{\lambda}{t}$ is a dominant integral weight, 
so that there exists a line bundle $\mathcal{L}$ associated with the principal bundle $G \rightarrow G/H$. The splitting of $\h^\C \rightarrow \g^\C$ induces 
a connection on $\mathcal{L}$, whose curvature is given by $\frac{-\omega}{t}$. In the compact case, the choice of $\mathfrak n_{-}$ gives furthermore a holomorphic 
structure on both the coadjoint orbit and the line bundle. By the Borel-Weil Theorem the $\g^\C$-module of holomorphic sections is irreducible of highest weight equal 
to the first Chern class of $\mathcal L$.
Moreover, the formula in Lemma \ref{comp} can be interpreted as the prequantization of a function $f$. Namely, 
$f\mapsto \widehat{f}:= f + t \nabla_{X_f}$ ($X_f:=\{f,\cdot\}$) is such that $[\widehat{f},\widehat{g}] = t \widehat{\{f,g\}}$. 
In this context Lemma \ref{comp} claims that the Hamiltonian functions given by the momentum map, whose Hamiltonian flow leaves the polarization invariant, 
are quantizable in the sense of geometric quantization.
\end{Rem}

Recall that $B\in U(\mathfrak n_-)\widehat{\otimes}U(\mathfrak n_+)\llb t \rrb$. Therefore $\star$ obviously defines a deformation quantization with 
separation of variables, where 
$$
\widetilde{\mathcal{O}}_t := \{ f \in C^\infty_{\mathcal O_\lambda} \llb t \rrb \, \big|\, e_\alpha.f = 0 \ \ \forall \alpha \in \Delta_+\}
=\widetilde{\mathcal{O}}_0\llb t \rrb\,.
$$
Moreover, we have the following uniqueness statement: 
\begin{Prop}
There is a unique $G$-invariant $\star$-product on $\mathcal{O}_\lambda$ with separation of variables for a given $G$-invariant polarization that furthermore 
has a strong quantum momentum map. 
\end{Prop}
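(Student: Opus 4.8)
The plan is to prove uniqueness by showing that the three hypotheses rigidly determine left and right $\star$-multiplication by the generating functions $f_h=h_{|\mathcal O_\lambda}$ ($h\in\g^\C$), and then to propagate this from the generators to all functions by a Hochschild-cocycle argument. So let $\star_1,\star_2$ be two $G$-invariant $\star$-products with separation of variables for the given $G$-invariant polarization and with a strong quantum momentum map; I want to conclude $\star_1=\star_2$. First I would record that $G$-invariance rigidifies the polarization: the only $G$-invariant formal deformation of $P_0$ is the constant one (an $H$-invariant Lagrangian deformation of $\mathfrak n_-$ inside $(\g/\h)^\C$ would produce an $H$-invariant element of $\mathrm{Sym}^2(\mathfrak n_-)$, of which there are none, since the weights occurring in $\mathfrak n_-$ never cancel). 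Hence for both products $\mathcal O_t=\mathcal O_0\llb t\rrb$ and $\widetilde{\mathcal O}_t=\widetilde{\mathcal O}_0\llb t\rrb$, exactly as in Proposition \ref{mainprop}, and this gives the clean separation-of-variables identities $a\star_i g=ag$ for $a\in\mathcal O_0$ and $f\star_i b=fb$ for $b\in\widetilde{\mathcal O}_0$.

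Step one is the rigidity of the momentum-map generators. Writing $L_{f_h}(g)=f_h\star g$ and $R_{f_h}(g)=g\star f_h$, the two separation-of-variables identities say that $L_{f_h}$ commutes with multiplication by every element of $\widetilde{\mathcal O}_0$ and $R_{f_h}$ commutes with multiplication by every element of $\mathcal O_0$; equivalently $L_{f_h}$ differentiates only along $\mathfrak n_+$ and $R_{f_h}$ only along $\mathfrak n_-$. On the other hand the strong quantum momentum map condition \eqref{eq-sqmm} reads $L_{f_h}-R_{f_h}=t\{f_h,\cdot\}$ exactly, with no higher corrections in $t$. I would then expand $L_{f_h}=f_h+\sum_{k\ge1}t^kL_k$ and $R_{f_h}=f_h+\sum_{k\ge1}t^kR_k$ and compare orders: for $k\ge2$ the identity forces $L_k=R_k$, but an operator that differentiates only along $\mathfrak n_+$ and only along $\mathfrak n_-$ is of order zero, and the normalization (the $B_i$ vanish on constants, so $L_k(1)=R_k(1)=0$) then makes it vanish; for $k=1$ the same comparison, after splitting the Hamiltonian vector field $\{f_h,\cdot\}$ into its $\mathfrak n_+$- and $\mathfrak n_-$-components, pins down $L_1$ and $R_1$ uniquely. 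The outcome is that $L_{f_h}$ and $R_{f_h}$ are first order and the same for any product in the class (reproducing the formula of Lemma \ref{comp}); in particular $f_h\star_1 g=f_h\star_2 g$ and $g\star_1 f_h=g\star_2 f_h$ for all $g$ and all $h$.

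For step two I would argue by induction on the order in $t$. Suppose $\star_1$ and $\star_2$ agree modulo $t^n$ and let $C$ be the bidifferential operator with $f\star_1 g-f\star_2 g=t^nC(f,g)+O(t^{n+1})$. As the leading difference of two associative products agreeing below order $n$, $C$ is a Hochschild $2$-cocycle for the commutative product, i.e. $f\,C(g,k)-C(fg,k)+C(f,gk)-C(f,g)\,k=0$, and it vanishes on constants. Step one gives $C(f_h,\cdot)=0=C(\cdot,f_h)$ for every $h$. Feeding $f=f_h$ into the cocycle identity yields $C(f_hg,k)=f_h\,C(g,k)$, and iterating together with $C(1,k)=0$ gives $C(P,k)=0$ for every polynomial $P$ in the functions $f_h$. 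Since $\mathcal O_\lambda$ is embedded in $\g^*$ precisely by these linear coordinates, such polynomials have dense $\infty$-jets at every point, and a bidifferential operator vanishing on a jet-dense set of first arguments vanishes identically; hence $C=0$, the induction closes, and $\star_1=\star_2$.

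The main obstacle is step one: the claim that the momentum-map generators carry no higher-order $\star$-multiplication corrections whatsoever. This rests on the incompatibility of the two ``pure-direction'' conditions coming from the two transverse polarizations — an operator differentiating only along $\mathfrak n_+$ and only along $\mathfrak n_-$ must be of order zero — combined with the normalization; one has to check carefully that these conditions hold \emph{order by order} in $t$, which is exactly where the $G$-invariant rigidity $\mathcal O_t=\mathcal O_0\llb t\rrb$, $\widetilde{\mathcal O}_t=\widetilde{\mathcal O}_0\llb t\rrb$ is needed (for a genuinely deformed polarization the higher $L_k$ would only be approximately pure-direction, and the argument would break). Everything after that — the cocycle propagation and the jet-density input — is routine.
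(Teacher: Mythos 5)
Your uniqueness argument is correct, and it follows the same two-stage strategy as the paper --- first rigidify $\star$-multiplication by the momentum functions $f_h$ (thereby recovering the formula of Lemma \ref{comp}), then propagate to all functions by a density argument --- but the implementation is genuinely different. The paper uses $G$-invariance to identify the space of invariant bidifferential operators, $\Diff^2_G(G/H)=((U\g/U\g\,\h)^{\otimes 2})^H$, writes the whole product as $f\star g=B(f\otimes g)$ with $B\in U(\mathfrak{n}_-)\widehat\otimes U(\mathfrak{n}_+)\llb t\rrb$ (so that the $\star$-commutator determines the product), and then invokes density of the subalgebra generated by $1$ and the momentum functions. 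You avoid the description of invariant differential operators altogether: your ``two transverse pure-direction conditions'' argument for $L_{f_h}$ and $R_{f_h}$ uses only associativity, the separation-of-variables identities and \eqref{eq-sqmm}, and your Hochschild-cocycle induction is a clean formalization of the paper's appeal to density of the subalgebra generated by momentum functions. Your route also isolates exactly where $G$-invariance of the data enters, namely in the rigidity of the polarization ($\mathcal{O}_t=\mathcal{O}_0\llb t\rrb$, $\widetilde{\mathcal{O}}_t=\widetilde{\mathcal{O}}_0\llb t\rrb$, via the weight argument ruling out invariant elements of $\mathrm{Sym}^2$ of a nilpotent part); the paper takes this point for granted, so making it explicit is a genuine improvement.

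There is, however, one omission: the Proposition asserts existence \emph{and} uniqueness, and your proposal proves only uniqueness. Existence is precisely where the hypothesis ``for a given $G$-invariant polarization'' must be confronted: one has to check that an arbitrary $G$-invariant polarization is of the form covered by the Alekseev--Lachowska construction, so that Proposition \ref{mainprop} supplies a product with separation of variables and a strong quantum momentum map. The paper settles this in one line: a $G$-invariant polarization is in particular $\h$-invariant, hence splits into one-dimensional root spaces, hence equals $\mathfrak{n}_-$ for a suitable choice of positivity $\Delta=\Delta_-\sqcup\Delta_+$. This is a short step --- indeed it is the same weight-theoretic reasoning you already use for the rigidity of the polarization --- so your proof is easily completed, but as written it establishes only half of the statement.
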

\begin{proof}
First of all, since the polarization is $G$-invariant, it is also $\h$-invariant, so is splits into one-dimensional root spaces. Thus, after a suitable choice of positivity, the above construction applies.
For uniqueness, since
$$
\Diff ^2_G (G/H) = ((U\g/ U\g \, \mathfrak{h} ) ^{\otimes 2}) ^H
$$
and $\star$ admits separation of variables, we can write
$$
f \star g = B(f \otimes g) 
\text{  for } B \in Un_- \widehat{\otimes} Un_+ \llb t \rrb\,.
$$
From this form it follows that the $\star$-commutator determines the product itself. In particular, left multiplication by a Hamiltonian momentum function is determined and concretely given by Lemma \ref{comp}. 
Now uniqueness follows from the fact that Hamiltonian momentum functions together with $1$ generate a subalgebra in $(C^\infty \llb t \rrb, \star)$ that is dense in any formal neighborhood.
\end{proof}

\subsection{Characteristic class of the Alekseev-Lachowska $\star$-product}

In this Subsection we compute the characteristic class of a $G$-invariant $\star$-product with separation of variable 
and strong quantum momentum map, with the help of Theorem \ref{BDthm}. 
\begin{Lem}
For every $h\in\g^\C$, $h_{|\cO}\in\mathcal{F}(\mathcal O_t)$. 
\end{Lem}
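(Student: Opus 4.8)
The plan is to turn the defining condition of $\mathcal F(\mathcal O_t)$ into a purely classical statement and then exploit that $h_{|\cO}$ is a component of the moment map. Unwinding the definition, $h_{|\cO}\in\mathcal F(\mathcal O_t)$ means that $\tfrac1t[h_{|\cO},f]_\star\in\mathcal O_t$ for every $f\in\mathcal O_t$. By Proposition~\ref{mainprop} the product has a strong quantum momentum map, so \eqref{eq-sqmm} gives $\tfrac1t[h_{|\cO},f]_\star=\{h_{|\cO},f\}$ with no higher-order corrections in $t$. Since $\mathcal O_t=\mathcal O_0\llb t\rrb$ and the Poisson bracket is $\C\llb t\rrb$-linear, it suffices to prove the classical inclusion $\{h_{|\cO},\mathcal O_0\}\subset\mathcal O_0$.

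For this I would use that $h_{|\cO}$ is the $h$-component of the moment map of the coadjoint action, so its Hamiltonian vector field $\{h_{|\cO},\cdot\}$ equals, up to sign, the fundamental vector field $\xi_h$ generating the coadjoint $G$-action on $\mathcal O_\lambda$. Pulled back to $G$ via $\pi$, this $\xi_h$ acts as a \emph{right}-invariant vector field, whereas the operators $e_\alpha$ ($\alpha\in\Delta_-$) cutting out $\mathcal O_0$ are the \emph{left}-invariant vector fields fixed above. Left- and right-invariant vector fields on $G$ commute, so for $f\in\mathcal O_0$ and any $\alpha\in\Delta_-$ one has $e_\alpha.(\xi_h.f)=\xi_h.(e_\alpha.f)=0$; hence $\{h_{|\cO},f\}=\pm\,\xi_h.f$ again lies in $\mathcal O_0$. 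Equivalently, the Hamiltonian flow of $h_{|\cO}$ is the $G$-action, which preserves the $G$-invariant polarization $P_0$ induced by the $H$-invariant subspace $\mathfrak n_-$, and hence preserves $\mathcal O_0=\{f\mid df_{|P_0}=0\}$.

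The one genuinely delicate point is the identification of $\{h_{|\cO},\cdot\}$ with $\xi_h$, including the sign and normalization dictated by the conventions of Example~\ref{exa-ssca}; everything else is the standard commutation of left- and right-invariant vector fields. As an alternative that bypasses the moment-map picture, one can argue directly from the commutator formula behind Lemma~\ref{comp}: for $f\in\mathcal O_0$ the terms involving $e_{-\alpha}.f$ vanish and $\tfrac1t[h_{|\cO},f]_\star=\sum_{\alpha\in\Delta_+}(e_{-\alpha}.h_{|\cO})\,(e_\alpha.f)$, after which one checks that this expression is annihilated by every $e_{-\beta}$ using the structure constants $c^\gamma_{\beta,-\alpha}$. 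I expect this second route to be correct but appreciably more tedious than the conceptual one.
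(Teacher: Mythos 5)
Your proof is correct and follows essentially the same route as the paper: the strong quantum momentum map property \eqref{eq-sqmm} reduces the claim to the classical inclusion $\{h_{|\cO},\mathcal O_t\}=h.\mathcal O_t\subset\mathcal O_t$, which holds because the infinitesimal $G$-action preserves the $G$-invariant polarization. You merely spell out the last step (left- and right-invariant vector fields on $G$ commute) that the paper leaves implicit, and the sign ambiguity you worry about is harmless since $\pm\,\xi_h.f\in\mathcal O_0$ either way.
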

\begin{proof}
This follows from a simple computation: 
$$
\frac1t[h_{|\cO},\mathcal O_t]_\star=\{h_{|\cO},\mathcal O_t\}=h.\mathcal O_t\subset\mathcal O_t\,,
$$
where we made use of the strong quantum momentum map property \eqref{eq-sqmm}. 
\end{proof}
\begin{Prop}
The following morphism of $\mathcal{O}_t$-modules is onto: 
\begin{equation}\label{eq-map2}
\mathcal{O}_t\otimes (\C1 \oplus \g) \longrightarrow \mathcal{F}(\mathcal{O}_t)\,,\,f \otimes h \longmapsto f\star h_{|\cO} = fh_{|\cO}\,.
\end{equation}
Furthermore, there is a local commuting basis of $\mathcal{F}(\mathcal{O}_t)$ in $C^\infty_{\mathcal O_\lambda}$ (i.e.~independent of $t$).
Moreover, the Lie bracket on $\mathcal{F}(\mathcal{O}_t)$ coincides with the Poisson bracket.
\end{Prop}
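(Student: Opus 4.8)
The plan is to treat the three assertions in the order: surjectivity of \eqref{eq-map2}, the identification of the two brackets, and finally the existence of a $t$-independent commuting basis, since each step feeds the next.

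First I would prove surjectivity by reducing to the tangent sheaf via the Bressler--Donin sequence of Proposition~\ref{prop3.16},
\[
0 \longrightarrow \mathcal{O}_t \longrightarrow \mathcal{F}(\mathcal{O}_t) \stackrel{\pi}{\longrightarrow} \mathcal{T}_{\mathcal{O}_t} \longrightarrow 0\,.
\]
The summand $\mathcal{O}_t\otimes\C1$ already maps onto $\mathcal{O}_t=\ker\pi$, so it suffices to show that $\pi$ composed with \eqref{eq-map2} is onto $\mathcal{T}_{\mathcal{O}_t}$. By the strong quantum momentum map property \eqref{eq-sqmm}, $\pi(h_{|\cO})$ is the derivation $\frac1t[h_{|\cO},\cdot]_\star=\{h_{|\cO},\cdot\}$ of $\mathcal{O}_t$, i.e.\ the fundamental vector field of $h$ (which preserves $\mathcal{O}_0$, the polarization being $G$-invariant). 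Since $\mathcal{T}_{\mathcal{O}_t}$ is locally free, Nakayama reduces surjectivity to the fibres, where the map sends $h$ to the class of the fundamental vector field of $h$ in $T_m^\C\cO/(P_0)_m$. Transitivity of the $G$-action makes $\g\to T_m\cO$ onto, and because $P_0\cap\overline{P_0}=0$ the real tangent space still surjects onto the quotient; hence the composite is onto and \eqref{eq-map2} is surjective.

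Next I would show that $\frac1t[\cdot,\cdot]_\star$ and $\{\cdot,\cdot\}$ agree on $\mathcal{F}(\mathcal{O}_t)$ by analysing the difference $\Phi(F,G):=\frac1t[F,G]_\star-\{F,G\}$. By \eqref{eq-sqmm}, $\Phi$ vanishes whenever one argument is a momentum function $h_{|\cO}$; and since $\mathcal{O}_t$ is both $\star$-commutative and Lagrangian ($\{\mathcal{O}_0,\mathcal{O}_0\}=0$), $\Phi$ also vanishes on $\mathcal{O}_t\times\mathcal{O}_t$. The key point is that $\Phi$ is $\mathcal{O}_t$-bilinear: writing $aF=a\star F$ for $a\in\mathcal{O}_t$ (separation of variables) and using associativity gives $\frac1t[aF,G]_\star=a\cdot\frac1t[F,G]_\star+\big(\frac1t[a,G]_\star\big)F$, which matches the Leibniz rule for $\{\cdot,\cdot\}$ once one checks $\Phi(a,G)=0$ for $a\in\mathcal{O}_t$ — and this follows by expanding $G$ through \eqref{eq-map2} and invoking \eqref{eq-sqmm} together with $\{\mathcal{O}_0,\mathcal{O}_0\}=0$. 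Thus $\Phi(aF,G)=a\,\Phi(F,G)$, so $\Phi$ is $\mathcal{O}_t$-bilinear and, vanishing on the generators $1$ and $h_{|\cO}$ furnished by surjectivity, is identically zero. I expect this bootstrap from the momentum functions to the whole $\mathcal{O}_t$-module to be the main obstacle.

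Finally I would produce the $t$-independent commuting basis. At $t=0$ the sheaf $\mathcal{F}(\mathcal{O}_0)$ consists of the functions affine along the Lagrangian leaves of $P_0$ and admits a local Poisson-commuting basis $1,p_1,\dots,p_n$ of fibre-linear momenta; since reduction mod $t$ of \eqref{eq-map2} expresses each $p_i$ as an $\mathcal{O}_0$-combination of the $t$-independent functions $h_{|\cO}$, the $p_i$ lie in $\mathcal{F}(\mathcal{O}_t)$ and are independent of $t$. By the previous step $\frac1t[p_i,p_j]_\star=\{p_i,p_j\}=0$, so they $\star$-commute, and as they form a basis of $\mathcal{F}(\mathcal{O}_0)$, Nakayama lifts them to an $\mathcal{O}_t$-basis $1,p_1,\dots,p_n$ of $\mathcal{F}(\mathcal{O}_t)$ lying in $C^\infty_{\cO}$. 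This simultaneously records that the Lie bracket on $\mathcal{F}(\mathcal{O}_t)$ is the Poisson bracket.
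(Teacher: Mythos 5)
Your proof is correct, and it rests on the same pillars as the paper's: the Bressler--Donin sequence of Proposition~\ref{prop3.16}, the strong quantum momentum map \eqref{eq-sqmm} to identify the image in $\mathcal{T}_{\mathcal{O}_t}$ with fundamental vector fields, transitivity of the $G$-action, and separation of variables to make the $\mathcal{O}_t$-module structure pointwise. The differences are mostly packaging. For surjectivity, the paper composes \eqref{eq-map2} with the explicit local isomorphism \eqref{eq-map} and argues that a map of finite free $\mathcal{O}_t$-modules which becomes onto after tensoring with $C^\infty_U\llb t \rrb$ is onto; your fibrewise Nakayama reduction is the same argument one step earlier. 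For the bracket, the paper directly expands $[f\star x_{|\cO},g\star y_{|\cO}]_\star$ by Leibniz, commutativity of $\mathcal{O}_t$, \eqref{eq-sqmm} and separation of variables; your defect $\Phi$ with its $\mathcal{O}_t$-bilinearity bootstrap is the identical computation, organized so that it is transparent which axiom enters where. The routes genuinely diverge only in the last step: the paper takes the $t=0$ parts of the $f_i$'s furnished by Proposition~\ref{prop3.16} (legitimate because surjectivity plus the pointwise module structure give $\mathcal{F}(\mathcal{O}_t)=\mathcal{F}_0\llb t \rrb$ for a $t$-independent subsheaf $\mathcal{F}_0$), whereas you build a commuting basis classically and lift it by Nakayama. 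One caveat on your version: the polarization here is complex (in the compact case $P_0$ is the anti-holomorphic tangent bundle), so $P_0$ has no real Lagrangian leaves and ``fibre-linear momenta'' is only a metaphor; the fact you need about $\mathcal{F}(\mathcal{O}_0)$ --- local generators $a_i$ of $\mathcal{O}_0$ and Poisson-commuting $p_i$ with $\{a_i,p_j\}=\delta_{ij}$ spanning $\mathcal{F}(\mathcal{O}_0)$ over $\mathcal{O}_0$ --- should be justified either by analytic adapted (complexified Darboux) coordinates, available under the paper's standing assumptions, or more economically by reducing Proposition~\ref{prop3.16} modulo $t$, which is in effect what the paper does.
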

\begin{proof}
Let us compose the morphism \eqref{eq-map2} with the local isomorphism \eqref{eq-map} of Proposition \ref{prop3.16}: we get 
$$
\g \longrightarrow \bigoplus_{i=1}^n (\mathcal{O}_t)_{|U} \,,\, x \longmapsto \Big(\frac{1}{t}[x_{|\cO},a_1]_\star,\cdots, \frac{1}{t}[x_{|\cO},a_n]_\star\Big).
$$
The strong quantum momentum map property ensures that 
$$
\frac{1}{t}[x_{|\cO},a_i]_\star = \{x_{|\cO},a_i\} = da_i(x).
$$
Since the $da_i$'s are independent and the Hamiltonian functions span the entire tangent space, this map becomes onto after tensoring with $C^\infty_U\llb t\rrb$. 
We thus have a morphism of free finitely generated $(\mathcal{O}_t)_{|U}$ modules that is onto after tensoring with $C^\infty_U \llb t \rrb$: it follows 
that the original map is onto as well. 
In order to get the statement about the Lie bracket we compute for $f,g \in \mathcal{O}_t$ and $x,y \in \g$
\begin{eqnarray*}
[fx_{|\cO},gy_{|\cO}]_\star & = & [f\star x_{|\cO},g\star y_{|\cO}]_\star \\
& = & [f,g]_\star\star x_{|\cO}\star y_{|\cO} + f\star[x_{|\cO},g]_\star\star y_{|\cO} + \\ 
&   & + g\star[f,y_{|\cO}]_\star\star x_{|\cO} + g\star f\star[x_{|\cO},y_{|\cO}]_\star \\
&=& t\big(f\{x_{|\cO},g\}y_{|\cO} + g\{f,y_{|\cO}\}x_{|\cO} + gf\{x_{|\cO},y_{|\cO}\} \big) \\
&=& t \{fx_{|\cO},gy_{|\cO}\}\,.
\end{eqnarray*}
Finally, by separation of variables, the $\mathcal{O}_t$-module structure on $\mathcal{F}(\mathcal{O}_t)$ is trivial, 
and since the $a_i$'s are independent of $t$, we can just take the $t=0$ part of the $f_i$'s in Proposition \ref{prop3.16}.
\end{proof}
\begin{Cor}\label{cor-3.8}
The class $cl(\mathcal{A}_t,\mathcal{O}_t)\in H^2(\mathcal O_\lambda,\C\llb t \rrb)$ does not depend on $t$.
\end{Cor}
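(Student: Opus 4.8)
The plan is to follow the Bressler--Donin recipe for $cl(\mathcal{A}_t,\mathcal{O}_t)$ step by step and check that each ingredient can be arranged to be independent of $t$. Recall that this class is the image, under the boundary map of the short exact sequence $0 \to \underline{\C}\llb t\rrb \to \mathcal{O}_t \to \Omega_{\mathcal{O}_t}^{1,\text{cl}} \to 0$, of the \v{C}ech $1$-cocycle obtained from the pairwise differences of the local splittings of $0 \to \mathcal{O}_t \to \mathcal{F}(\mathcal{O}_t) \to \mathcal{T}_{\mathcal{O}_t} \to 0$ furnished by Proposition \ref{prop3.16}. The first observation I would make is structural: by Proposition \ref{mainprop} we have $\mathcal{O}_t = \mathcal{O}_0\llb t\rrb$, hence $\Omega_{\mathcal{O}_t}^{1,\text{cl}} = \Omega_{\mathcal{O}_0}^{1,\text{cl}}\llb t\rrb$, and the entire defining short exact sequence is the $\llb t\rrb$-coefficient extension of the classical one $0 \to \underline{\C} \to \mathcal{O}_0 \to \Omega_{\mathcal{O}_0}^{1,\text{cl}} \to 0$. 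The boundary map is therefore $\C\llb t\rrb$-linear and restricts to the classical boundary map on constant (i.e.~$t$-independent) sections, so it suffices to exhibit a representing cocycle lying in $\Omega_{\mathcal{O}_0}^{1,\text{cl}} \subset \Omega_{\mathcal{O}_0}^{1,\text{cl}}\llb t\rrb$.

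To produce such a cocycle I would use the $t$-independent local data supplied by Proposition \ref{prop3.16} and by the Proposition immediately preceding this Corollary. The functions $a_1, \dots, a_n$ of Proposition \ref{prop3.16} are already independent of $t$, and the preceding Proposition provides a local commuting basis $1, f_1, \dots, f_n$ of $\mathcal{F}(\mathcal{O}_t)$ that lies in $C^\infty_{\mathcal{O}_\lambda}$, hence is also independent of $t$. These data determine a local splitting of the sequence, namely the $\mathcal{O}_t$-linear section sending the basis of $\mathcal{T}_{\mathcal{O}_t}$ dual to $(da_i)$ to the $f_i$. The key point I would verify carefully is that the identification $(\mathcal{T}_{\mathcal{O}_t})_{|U} \cong \bigoplus_i (\mathcal{O}_t)_{|U}$ of Proposition \ref{prop3.16}, given by $g \mapsto \big(\tfrac{1}{t}[g,a_i]_\star\big)_i$, is itself independent of $t$: writing an element of $\mathcal{F}(\mathcal{O}_t)$ as an $\mathcal{O}_t$-combination $\sum_h c_h\,h_{|\cO}$ of the momentum functions, and using that $[c_h,a_i]_\star = 0$ (both lie in the commutative $\mathcal{O}_t$) together with the strong quantum momentum map property \eqref{eq-sqmm}, one finds $\tfrac{1}{t}[\sum_h c_h\,h_{|\cO},a_i]_\star = \sum_h c_h\,\{h_{|\cO},a_i\}$, with no higher-order corrections in $t$. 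Thus the local isomorphism, the splitting, and hence the \v{C}ech differences $s_\alpha - s_\beta$ are all independent of $t$.

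The step requiring the most care is ensuring that these $t$-independent differences genuinely land in the closed forms $\Omega_{\mathcal{O}_0}^{1,\text{cl}}$, rather than merely in $\Omega^1_{\mathcal{O}_0}$; this is exactly the content guaranteed by the Bressler--Donin construction, and it rests on the commutativity of the chosen basis $f_1, \dots, f_n$ and on the fact, established in the preceding Proposition, that the bracket on $\mathcal{F}(\mathcal{O}_t)$ agrees with the Poisson bracket. Granting this, the resulting cocycle represents a class in $H^1(\mathcal{O}_\lambda, \Omega_{\mathcal{O}_0}^{1,\text{cl}})$, whose image under the boundary map lands in $H^2(\mathcal{O}_\lambda, \C) \subset H^2(\mathcal{O}_\lambda, \C\llb t\rrb)$. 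This is precisely the assertion that $cl(\mathcal{A}_t,\mathcal{O}_t)$ is independent of $t$.
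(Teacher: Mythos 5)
Your proof is correct and follows essentially the same route as the paper: the paper's own (two-sentence) proof likewise invokes the preceding Proposition to obtain a local splitting independent of $t$ and concludes that the induced class is $t$-independent. You merely spell out the details the paper leaves implicit, namely the identification $\mathcal{O}_t=\mathcal{O}_0\llb t \rrb$, the $t$-independence of the local isomorphism \eqref{eq-map} via the strong quantum momentum map, and the compatibility of the boundary map with the inclusion $H^2(\mathcal{O}_\lambda,\C)\subset H^2(\mathcal{O}_\lambda,\C\llb t \rrb)$.
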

\begin{proof}
The above Proposition states that there exists a local splitting which is independent of $t$. 
The induced characteristic class is then independent of $t$.
\end{proof}
Therefore we have the following: 
\begin{Thm}\label{thm-3.9}
The characteristic class $\theta$ of $\mathcal A_t$ is 
$$
\theta = [\omega] - \frac{t}{2}c_1(P_0)\,.
$$
\end{Thm}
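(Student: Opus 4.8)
The plan is to read off the result by combining the Bressler--Donin formula of Theorem \ref{BDthm} with the $t$-independence recorded in Corollary \ref{cor-3.8}, and then to fix the remaining ambiguity via the normalization convention for characteristic classes set up in Section \ref{ssec-classif}. The substantive work has in fact already been done: Proposition \ref{mainprop} exhibits $(\mathcal A_t,\mathcal O_t)$ as a polarized quantization (so the hypotheses of Theorem \ref{BDthm} are in force, via the analyticity of the coadjoint orbit and its $G$-invariant polarization), and the Proposition preceding Corollary \ref{cor-3.8} produces a local commuting basis of $\mathcal F(\mathcal O_t)$ independent of $t$. What remains is a short bookkeeping argument on the $t^0$ coefficient.

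First I would apply Theorem \ref{BDthm} directly to the polarized quantization $(\mathcal A_t,\mathcal O_t)$, obtaining
$$
\theta = cl(\mathcal A_t,\mathcal O_t) - \frac{t}{2}c_1(P_0)\,.
$$
A priori $cl(\mathcal A_t,\mathcal O_t)$ lies in $H^2(\cO,\C\llb t\rrb)$ and could carry $t$-dependence, but Corollary \ref{cor-3.8} tells us it does not; so I would write $cl(\mathcal A_t,\mathcal O_t)=c$ for a fixed class $c\in H^2(\cO,\C)$ independent of $t$. Next I would extract the constant term in $t$. By the classification of Section \ref{ssec-classif}, every characteristic class lies in $[\omega]+tH^2(\cO,\C)\llb t\rrb$, so the $t^0$ coefficient of $\theta$ is exactly $[\omega]$. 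On the other hand, setting $t=0$ in the displayed identity kills the $-\frac{t}{2}c_1(P_0)$ term and leaves the $t^0$ part of $c$, which by $t$-independence is all of $c$. Matching the two expressions for the constant term forces $c=[\omega]$, and substituting back yields $\theta=[\omega]-\frac{t}{2}c_1(P_0)$, as claimed.

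There is essentially no analytic obstacle at this final stage, since the real difficulty was already surmounted in establishing the $t$-independent local splitting behind Corollary \ref{cor-3.8} and in checking the hypotheses of Theorem \ref{BDthm}. The only point requiring genuine care is the appeal to the normalization convention: one must ensure that the Bressler--Donin class $c$ and $\theta$ are being read inside the \emph{same} affine identification $[\omega]+tH^2(\cO,\C)\llb t\rrb$, so that equating constant terms legitimately identifies $c$ with $[\omega]$ rather than with some other representative of the equivalence class.
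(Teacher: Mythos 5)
Your proposal is correct and is essentially the paper's own argument: the paper's proof is the one-line statement that the result ``follows from Theorem \ref{BDthm} and Corollary \ref{cor-3.8}'', and the bookkeeping you supply---using the normalization $\theta\in[\omega]+tH^2(\cO,\C)\llb t\rrb$ from Subsection \ref{ssec-classif} to pin down the $t$-independent class $cl(\mathcal A_t,\mathcal O_t)$ as $[\omega]$---is exactly the intended way those two ingredients combine. The only difference is that you make explicit the matching of constant terms (and the compatibility-of-conventions caveat), which the paper leaves implicit.
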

\begin{proof}
It follows from Theorem \ref{BDthm} and Corollary \ref{cor-3.8}.
\end{proof}
\begin{Rem}
It should be noted, that neither separation of variables, nor the strong quantum momentum map property puts any restriction on the characteristic class of a $G$-invariant $\star$-product by itself. Namely, \cite{BD} construct $\star$-products with separation of variables of any characteristic class. And the following Lemma shows that for $G$ compact, we can always achieve the strong quantum momentum property.
\end{Rem}
\begin{Lem}
Let $G$ be compact, acting on $(M,\omega)$, and let $\star$ be a $G$-invariant $\star$-product. Then $\star$ is equivalent to a $\star$-product satisfying the strong quantum momentum map property.
\end{Lem}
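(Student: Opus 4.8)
The plan is to build a quantum momentum map by averaging over $G$ and then correcting $\star$ order by order, with compactness of $G$ entering twice: once through the Haar average that keeps every choice $G$-equivariant, and once through a cohomological vanishing that kills the obstructions. Throughout, write $\mu\colon M\to\g^*$ for the (equivariant, obtained by averaging) classical momentum map, $\mu_x:=\langle\mu,x\rangle$, and $\rho(x)$ for the fundamental vector field, so that $\{\mu_x,\cdot\}=\mathcal{L}_{\rho(x)}$. Since $\star$ is $G$-invariant, each $\mathcal{L}_{\rho(x)}$ is a $\C\llb t\rrb$-linear derivation of $\mathcal{A}_t$ whose symbol agrees with that of the inner derivation $\tfrac1t[\mu_x,\cdot]_\star$. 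The goal is to arrange, after a $G$-equivariant equivalence, a $\C\llb t\rrb$-linear map $\hat\mu\colon\g\to\mathcal{A}_t$ with $\hat\mu(x)=\mu_x+O(t)$ and $\tfrac1t[\hat\mu(x),\cdot]_\star=\mathcal{L}_{\rho(x)}$ exactly; this is the strong quantum momentum map property, the equivalence being used to normalise the Hochschild terms of $\star$ so that the averaged $\hat\mu$ becomes strong.

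First I would set up the recursion. Suppose $\tfrac1t[\hat\mu(x),\cdot]_\star=\mathcal{L}_{\rho(x)}$ holds modulo $t^{k}$. The discrepancy at order $t^{k}$ is the symbol of an $O(t^{k})$ derivation, hence a vector field $V^x_k$ depending $\C$-linearly and $G$-equivariantly on $x$, and correcting $\hat\mu(x)$ by $t^{k}a_k(x)$ removes it exactly when $V^x_k$ is Hamiltonian, i.e.\ when the closed one-form $\iota_{V^x_k}\omega$ is exact. Thus the sole obstruction is the class $o_k(x):=[\iota_{V^x_k}\omega]\in H^1_{\mathrm{dR}}(M)$, and $o_k$ is a $G$-equivariant linear map $\g\to H^1_{\mathrm{dR}}(M)$. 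Because $G$ is compact and connected it acts trivially on $H^1_{\mathrm{dR}}(M)$, so $o_k$ factors through the coinvariants $\g/[\g,\g]$; writing $\g=\mathfrak{z}\oplus[\g,\g]$ this means $o_k$ is supported on the centre $\mathfrak{z}$ and vanishes automatically in the semisimple directions.

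The heart of the argument is the vanishing of $o_k$ on $\mathfrak{z}$, and this is the step I expect to be the main obstacle. For $z\in\mathfrak{z}$ the field $\rho(z)$ integrates to a Hamiltonian torus action, and I would invoke equivariant formality: on $G$-invariant forms $\iota_{\rho(z)}$ anticommutes with $d$ (as $\mathcal{L}_{\rho(z)}=0$ there), so contraction descends to cohomology, and the descended map vanishes because $H^\ast(M)$ injects into $H^\ast(\Fix)$ while $\iota_{\rho(z)}$ restricts to $0$ on $\Fix$ (where $\rho(z)=0$); hence $o_k(z)=0$. This is precisely where the Hamiltonian hypothesis is essential (through nonemptiness of $\Fix$), and where compactness of $M$ enters the formality input; for the semisimple part no such input is needed. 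With all $o_k$ killed, averaging each solution over $G$ yields a $G$-equivariant $\C\llb t\rrb$-linear $\hat\mu$, and a final Chevalley--Eilenberg averaging over $\g$ enforces the bracket compatibility $\tfrac1t[\hat\mu(x),\hat\mu(y)]_\star=\hat\mu([x,y])$. At each stage the freedom to replace $\star$ by a $G$-equivariant equivalent product $\exp(tD)\cdot\star$, which alters the higher Hochschild terms without changing the characteristic class, is what allows the recursion to be closed in the strong (correction-free at leading order) normal form.
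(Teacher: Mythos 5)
There is a genuine gap, and it sits exactly where the paper's proof does its real work. Granting your obstruction analysis, what your recursion produces is a quantum momentum map for the \emph{original} product: a map $\hat\mu\colon\g\to\mathcal{A}_t$ with $\hat\mu(x)=\mu_x+O(t)$ and $\tfrac{1}{t}[\hat\mu(x),\cdot]_\star=\mathcal{L}_{\rho(x)}$. You call this ``the strong quantum momentum map property,'' but it is not: by \eqref{eq-sqmm}, the strong property requires the \emph{undeformed} classical momentum functions themselves to implement the action, $[\mu_x,\cdot]_\star=t\{\mu_x,\cdot\}$, with no $O(t)$ corrections allowed. So one must still produce a $G$-equivariant equivalence $F=\id+O(t)$ with $F(\mu_x)=\hat\mu(x)$ for all $x\in\g$ simultaneously, and then conjugate $\star$ by $F$. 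Your proposal only gestures at this step (``the equivalence being used to normalise the Hochschild terms''), but it is the crux, and it is not automatic: a differential operator only sees jets, so one needs the assignment $\mu_x\mapsto\hat\mu(x)$, defined on a finite-dimensional space of functions, to be realizable by a globally defined differential operator. The paper supplies exactly this missing ingredient: since the Hamiltonian vector fields of the $\mu_x$ are Killing fields of a $G$-invariant metric ($G$ compact), they are determined by their $1$-jets; hence distinct momentum functions are distinguished by their jets at a point, the assignment is well defined on jets and can be realized by a differential operator $F$, which is then averaged over $G$ to make it equivariant. Without this step, your argument proves existence of a quantum momentum map for $\star$, which is strictly weaker than the Lemma's conclusion.

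A secondary problem concerns your treatment of the central directions, which conflates two different classes. The obstruction you defined is $o_k(z)=[\iota_{V^z_k}\omega]$, where $V^z_k$ is the order-$k$ discrepancy field; your formality/fixed-point argument instead shows vanishing of $[\iota_{\rho(z)}\beta]$ for closed $G$-invariant $2$-forms $\beta$. These coincide only after identifying the order-$k$ obstruction with the contraction of $\rho(z)$ into an invariant representative of the order-$k$ term of the characteristic class (a Gutt--Rawnsley-type statement, requiring e.g.\ an invariant Fedosov construction), which you never establish; moreover equivariant formality needs $M$ compact, which the Lemma does not assume. In the paper this issue never arises: its proof takes $\g$ semi-simple (the setting of the whole paper), so $\g=[\g,\g]$ and your own coinvariance argument already kills every obstruction. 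The delicate part of the Lemma was never the existence of $\hat\mu$, but the equivalence turning $\hat\mu$ into the classical momentum map.
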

\begin{proof}
The same proof as in the classical case shows that, $\g$ being semi-simple, there is a unique quantum momentum map. Namely, for every $\xi \in \g$ there is a function $\mu_\xi \in C^\infty(M)\llb t \rrb$ such that $[\mu_\xi, \mu_\nu]_\star = t \mu_{[\xi,\nu]}$. Let $\mu^0$ denote the $0$-th order part, that is the classical momentum map. Now we are seeking a differential operator $F$ that sends $\mu^0_\xi$ to $\mu_\xi$. To construct such a differential operator, we only need to be able to distinguish the different Hamiltonians by their value in a formal neighborhood. This is certainly possible, e.g. by exhibiting the Hamiltonian vector fields as Killing fields of a $G$-invariant metric, which are uniquely determined by their $1$-jets. After averaging by $G$, we can furthermore assume that $F$ is $G$-equivariant. Now, it is easily seen that the $\star$-product $F^{-1}(F(\cdot)\star F(\cdot))$ is $G$-invariant with a strong quantum momentum map.
\end{proof}


\section{Quantum dimension}

We borrow the notation from the previous Section and assume that $G$ is compact. In particular the integral in the r.h.s.~of \eqref{FNT} makes perfect sense. 
Moreover, the polarization in this case is purely complex and thus induces a K\"ahler structure on $\mathcal O_\lambda$. By possibly interchanging $\Delta_-$ and $\Delta_+$ we can assume that $P_0$ is the anti-holomorphic tangent bundle and thus left multiplication with holomorphic function is trivial.

\subsection{Reminder about the cohomology of a coadjoint orbit}

In this Subsection we provide a description of the cohomology of $\mathcal O_\lambda$, in particular $H^2(\cO,\R)$, in Lie theoretic terms. 
We consider the subcompex of $\g$-invariant differential forms, which is isomorphic to the relative Chevalley-Eilenberg complex $C^\bullet(\g,\h)$
$$
C^k(\g,\h) := \Hom_{\h\textrm{-mod}}\Big(\Lambda^k(\g/\h),\R\Big)\,.
$$
We denote the corresponding cohomology by $H^\bullet(\g,\h)$; for every $k$ there is a linear map 
\begin{equation}\label{eq1}
H^k(\g,\h) \longrightarrow H^k(\cO,\R)
\end{equation}
which happens to be an isomorphism as $G$ is compact (by averaging). Moreover, there is a map 
$$
\big(\g^*\big)^\h \longrightarrow C^2(\g,\h)\,,\, \alpha \longmapsto -\alpha([\cdot,\cdot])\,.
$$
It induces an isomorphism 
$$
\frac{\big(\g^*\big)^\h}{(\h^\perp)^\h}\longrightarrow H^2(\g,\h)
$$
which happens to be an isomorphism (one can prove it by direct computation or by considering the five-term exact sequence derived from the Hochschild-Serre 
spectral sequence for relative cohomology). As we have an $\h$-equivariant projection $p:\g\to\h$, the above gives an isomorphism 
\begin{equation}\label{eq2}
(\h^*)^\h=\Big(\frac{\h}{[\h,\h]}\Big)^* \longrightarrow  H^2(\g,\h)\,,\,\alpha \longmapsto -\alpha\big(p([\cdot,\cdot])\big)\,.
\end{equation}
It then follows from the definition of the symplectic form $\omega$ (see Example \ref{exa-ca}) that $[\omega]$ is the image of $-\lambda$ by the composition of \eqref{eq1} and \eqref{eq2} 
(here we abuse a bit the notation as we use the same symbol for a character and its restriction to $\h$). 

\medskip

Let $V$ be a complex $H$-module and consider the associated bundle $G\times_H V$ on $\mathcal O_\lambda=G/H$. 
\begin{Prop}
The first Chern class $c_1(G\times_H V)$ is the image of (the real part of) the character $i \, \chi_V$ of $V$ (recall $\chi_V(h):=Tr_V(h.\cdot)$) by the composition of \eqref{eq1} and \eqref{eq2}.
\end{Prop}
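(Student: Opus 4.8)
The plan is to compute the first Chern class of the associated bundle $G\times_H V$ by realizing it in Chevalley--Eilenberg terms and matching it against the isomorphism \eqref{eq2}. Since $G$ is compact, the $H$-module $V$ decomposes into a sum of one-dimensional $H$-representations (or at least the determinant line $\det V$ is what controls $c_1$), and the crucial point is that $c_1(G\times_H V)=c_1\big(G\times_H \det V\big)$, so it suffices to treat the case of a one-dimensional $H$-module, i.e.\ a character. For such a line bundle the Chern--Weil recipe gives the curvature of the canonical invariant connection coming from the $\h$-equivariant splitting $\g=\h\oplus(\g/\h)$ of Example \ref{exa-ca}. First I would fix the reductive decomposition and the projection $p:\g\to\h$ supplied just before \eqref{eq2}, and recall that the invariant connection on $G\times_H V$ associated with this splitting has curvature expressible purely in terms of the bracket composed with $p$.

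Next I would run the standard computation of the curvature of the invariant connection. For the principal bundle $G\to G/H$ with the horizontal distribution determined by $\g/\h$, the curvature $2$-form evaluated on horizontal lifts of $x,y\in\g/\h$ is $-p([x,y])\in\h$; applying the representation $V$ (i.e.\ differentiating the $H$-action) and taking the trace produces a $\g$-invariant $2$-form on $\cO$ whose value is $-\,d\chi_V\big(p([x,y])\big)$, where $d\chi_V$ is the differential of the character, a linear functional on $\h$. After the normalization $c_1=\frac{i}{2\pi}[\,\mathrm{curvature}\,]$ absorbed into the conventions of \eqref{eq2} (which already carries the factor $-\alpha(p([\cdot,\cdot]))$ and the promised identification of $H^2(\g,\h)$ with $(\h^*)^\h$), this exhibits $c_1(G\times_H V)$ as the image under \eqref{eq2} of the element $i\,\chi_V\in(\h^*)^\h$, understood via its restriction/differential on $\h$. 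Composing with \eqref{eq1} gives the class in $H^2(\cO,\R)$ claimed in the statement.

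I would then check the two bookkeeping points that make the identification honest. First, that the character $\chi_V(h)=\mathrm{Tr}_V(h\cdot)$ is $H$-invariant (it is a class function on $H$, hence Ad-invariant, so its differential lands in $(\h^*)^\h=(\h/[\h,\h])^*$, which is exactly the source of \eqref{eq2}); this is why the statement can take the real part of $i\chi_V$ and view it as an element of $(\h/[\h,\h])^*$. Second, that the reduction to $\det V$ is legitimate: $c_1$ is additive over direct sums and the trace of the $H$-action on $V$ equals the sum of the traces on the summands, so both sides of the claimed equality are additive and it suffices to verify it on lines, where the Chern--Weil computation above is transparent.

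The main obstacle is pinning down the signs and the factor of $i$ so that the Chern--Weil curvature of the invariant connection matches the specific normalization baked into \eqref{eq2} and into the convention $c_1=\frac{i}{2\pi}[\Omega]$; getting the orientation of the splitting $p$ and the direction of the bracket consistent with the formula $[\omega]\leftrightarrow-\lambda$ established just above is where a sign error would most easily creep in. Once the line-bundle case is nailed down with the correct constant, the general case follows immediately by additivity, and the placement of the factor $i$ in front of $\chi_V$ is precisely what reconciles the real cohomology class with the complex character.
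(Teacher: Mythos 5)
Your proposal is correct and follows essentially the same route as the paper: the $\h$-equivariant projection $p$ defines an invariant principal connection on $G\to G/H$ whose curvature, composed with the trace character, gives $-\chi_V\bigl(p([\cdot,\cdot])\bigr)$, which is precisely the image of $i\,\chi_V$ under \eqref{eq2} (and then \eqref{eq1}). The only differences are cosmetic: the paper skips your reduction to $\det V$ by taking the trace of the curvature of $G\times_H V$ directly (note that your claim that $V$ splits into one-dimensional $H$-representations fails when $H$ is nonabelian, but your $\det V$ fallback covers this), and the sign/factor-of-$i$ bookkeeping you flag is settled by the paper's stated convention that Chern classes take values in $(2\pi)^k\mathbb{Z}$ together with the reality of $c_1$.
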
 
\begin{proof}
Observe that $p$ represents a connection 1-form, which is principal since $p$ is $H$-invariant. Its curvature is 
$$
\Omega(\cdot,\cdot) = -p([\cdot,\cdot]) + [p(\cdot),p(\cdot)] \in \Hom_{\h\textrm{-mod}}\Big(\Lambda^2(\g/\h),\h\Big)\cong\Omega^2\big(\cO,G\times_H\h\big)^G\,.
$$
To get the curvature for the associated bundle $G\times_HV$ one just composes with the character $\chi_V$ and get 
$$
\Omega_V(\cdot,\cdot)=-\chi_V\big(p([\cdot,\cdot])\big)\,.
$$
This proves that $c_1(G\times_H V)$ is the image of $i \, \chi_V$ by the composition of \eqref{eq1} and \eqref{eq2}, or rather their complexifications. 
We then conclude by recalling that the first Chern class is real-valued. 
\end{proof}
Let us apply the above proposition to $V=\mathfrak{n}_-$, so that $G\times_HV=P_0$. We get that 
$c_1(P_0)$ is the image of $-i \, \sum_{\alpha \in \Delta^+} \alpha$.
We can therefore conclude, thanks to Theorem \ref{thm-3.9}, that the characteristic class of the Alekseev-Lachowska $\star$-product corresponds to 
$-\lambda + t \, i \, \rho$ under the composition of \eqref{eq1} and \eqref{eq2}. 

\subsection{Weyl dimension formula}

In this Subsection we compute $Tr(1)$ using Theorem \ref{thm-3.9} and the trace formula \eqref{FNT}. 
\begin{Thm}
\label{trformula}
$$Tr(1)= \frac{\prod_{\alpha \in \Delta_+} \langle \alpha, \frac{i}{t}\lambda + \rho \rangle}{\prod_{\alpha \in \Delta_+} \langle \alpha, \rho \rangle} .$$
\end{Thm}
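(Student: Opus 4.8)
The plan is to feed the characteristic class of Theorem \ref{thm-3.9} into the Fedosov--Nest--Tsygan formula \eqref{FNT} and then to recognize the resulting integrand as a Hirzebruch--Riemann--Roch integrand, which can be evaluated by the computation underlying the classical Weyl dimension formula. First I would substitute $\theta=[\omega]-\frac t2 c_1(P_0)$ into \eqref{FNT}, giving
$$
Tr(1)=\frac1{(2\pi)^n}\int_{\cO}\exp\Big(\frac{[\omega]}{t}-\tfrac12 c_1(P_0)\Big)\,\hat A(T\cO,\omega)\,,\qquad n=|\Delta_+|=\tfrac12\dim\cO\,.
$$
Since we are in the compact K\"ahler situation and $P_0$ is the anti-holomorphic tangent bundle, I would introduce the Chern roots $x_\alpha$ ($\alpha\in\Delta_+$) of the holomorphic tangent bundle $T^{1,0}\cO\cong G\times_H\mathfrak{n}_+$, so that $c_1(P_0)=-\sum_{\alpha\in\Delta_+}x_\alpha$.

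The key algebraic simplification is that the integrand assembles into the Todd class. Because $\frac{z/2}{\sinh(z/2)}$ is even and the complexified real tangent bundle has Chern roots $\pm x_\alpha$, the $\det^{1/2}$ in the definition of $\hat A$ yields $\hat A(T\cO,\omega)=\prod_{\alpha\in\Delta_+}\frac{x_\alpha/2}{\sinh(x_\alpha/2)}$; multiplying the factor coming from $\exp(-\frac12 c_1(P_0))=\exp(\frac12\sum x_\alpha)$ turns each term into $\frac{x_\alpha}{1-e^{-x_\alpha}}$. Hence
$$
\exp\big(-\tfrac12 c_1(P_0)\big)\,\hat A(T\cO,\omega)=\operatorname{Td}(T^{1,0}\cO)\,,\qquad
Tr(1)=\frac1{(2\pi)^n}\int_{\cO}e^{[\omega]/t}\,\operatorname{Td}(T^{1,0}\cO)\,.
$$

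Next I would identify $[\omega]/t$ as the (formal) first Chern class of the line bundle attached to a weight $\mu$. Indeed $[\omega]$ is the image of $-\lambda$ under the composition of \eqref{eq1} and \eqref{eq2}, while by the preceding Proposition of Subsection~4.1 the first Chern class of $G\times_H\C_\mu$ is the image of $i\mu$; comparing, $[\omega]/t=c_1(L_\mu)$ with $i\mu=-\lambda/t$, that is $\mu=\frac it\lambda$. Note how the factor $i$ relating symplectic forms ($-\lambda$) to Chern classes ($i\chi$) is exactly what produces the $\frac it\lambda$ in the final answer. The integral then reads formally as the holomorphic Euler characteristic $\chi(\cO,L_\mu)$, and I would evaluate it by the Atiyah--Bott--Berline--Vergne localization at the $T$-fixed points of $\cO$ (indexed by $W/W_H$) to obtain
$$
Tr(1)=\prod_{\alpha\in\Delta_+}\frac{\langle\alpha,\mu+\rho\rangle}{\langle\alpha,\rho\rangle}
=\frac{\prod_{\alpha\in\Delta_+}\langle\alpha,\frac it\lambda+\rho\rangle}{\prod_{\alpha\in\Delta_+}\langle\alpha,\rho\rangle}\,.
$$

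The main obstacle is precisely this last evaluation, with the correct normalization: keeping exact track of the powers of $i$ and of $2\pi$ entering through \eqref{FNT} and through the passage from curvature forms to Chern classes is where the real care lies. The cleanest way to sidestep a hands-on fixed-point computation is a polynomiality argument: both the cohomological integral and the displayed ratio are polynomial in $\mu$, they agree on the Zariski-dense set of integral dominant weights by Borel--Weil--Bott together with the classical Weyl dimension formula, and therefore they agree for the value $\mu=\frac it\lambda$ as well. This reduces the whole computation to the known identity and leaves only the bookkeeping of normalizations to be checked.
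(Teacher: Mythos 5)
Your proposal is correct and follows essentially the same route as the paper: substitute the class $\theta=[\omega]-\frac t2 c_1(P_0)$ from Theorem \ref{thm-3.9} into \eqref{FNT}, absorb $\exp(-\frac12 c_1(P_0))$ into $\hat A$ to produce the Todd class, read the result as a Riemann--Roch/holomorphic Euler characteristic for the line bundle of weight $\frac it\lambda$, and conclude via Borel--Weil, the Weyl dimension formula, and Zariski density of the dominant integral values. Your explicit Chern-root verification and the identification $\mu=\frac it\lambda$ via the Chern class proposition are just a more detailed rendering of the paper's argument (the localization alternative you mention is likewise noted in the paper's closing remark).
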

\begin{proof}
Using the formula for the characteristic class in the form $\theta = \omega - \frac{t}{2}c_1(P_0)$ we see that
\[
Tr(1) = \frac{1}{(2\pi)^n} \int_{\cO} e^{\frac{1}{t}\omega - \frac{1}{2}c_1(P_0)} \ \hat{A}(TM,\omega) = \frac{1}{(2\pi)^n} \int_{\cO} e^{\frac{1}{t}\omega} \ \widetilde{Td}(M,\omega).
\]
Where $\widetilde{Td}(M,\omega)$ is the unnormalized Todd genus, differing from the standard Todd genus by $(2\pi)^{-k}$ in its $2k$-th degree term.
\begin{Rem}
We follow here the convention of \cite{BD,NT} in the definition of Chern classes (classes take values in $(2\pi)^k \Z$). 
Hence the Todd genus appearing here has a different normalization than the standard Todd genus appearing in the Riemann-Roch formula.
\end{Rem}
Making up the the different normalization, we get
\[
Tr(1) = \int_{\cO} e^{\frac{1}{2\pi t}\omega} \ {Td}(M,\omega),
\]
which is the Riemann-Roch formula for a line bundle with curvature equal to $\frac{1}{i t}\omega$. In analogy with geometric quantization, if we formally assume that 
$\frac{-\lambda}{i t}$ is a dominant integral character, the above formula gives the Euler characteristic of the associated holomorphic line bundle, and by the Borel-Weil 
theorem, this is the dimension of the irreducible highest weight module of weight $\frac{-\lambda}{i t}$. Now the result follows from the Weyl dimension formula, by noting that the l.h.s is polynomial in $\lambda$ and that the $\lambda$'s for which $\frac{i}{t}\lambda$ is dominant integral are Zariski dense.
\end{proof}
\begin{Rem}
One can also prove Theorem \ref{trformula} by using localization (see e.g.~\cite[Section 7.5]{BGV}), effectively getting the same calculation as for the 
symplectic volume of a coadjoint orbit, where one uses at some point that the contribution of the $\hat{A}$-genus is invariant under the Weyl group.
\end{Rem}

\newcommand{\bysame}{\leavevmode\hbox to3em{\hrulefill}\,}

\end{document}